\documentclass[11pt, english, twoside]{article}
\usepackage[margin=3cm]{geometry}
\usepackage{amsbsy,amsmath,amsthm,amssymb}

\usepackage{fourier}
\DeclareMathAlphabet{\mathcal}{OMS}{cmsy}{m}{n}

\usepackage[dvipsnames]{xcolor}
\usepackage{bm}
\usepackage{esint}
\usepackage{babel}
\usepackage[colorlinks=true, citecolor = blue]{hyperref}

\usepackage{indentfirst}
\usepackage{changepage, cases}	
\usepackage{setspace}
\usepackage{indentfirst}
\usepackage{graphicx}
\usepackage{calc}
\usepackage{orcidlink}
\usepackage{booktabs,multirow}
\usepackage{makecell}

\usepackage{fancyhdr}
\allowdisplaybreaks

\theoremstyle{plain}
\newtheorem{theorem}{Theorem}[section]
\newtheorem{lemma}[theorem]{Lemma}
\newtheorem{corollary}[theorem]{Corollary}
\newtheorem{proposition}[theorem]{Proposition}

\theoremstyle{remark}
\newtheorem{remark}[theorem]{Remark}

\theoremstyle{definition}
\newtheorem{definition}{Definition}

\newenvironment{proof of theorem 1.1}{{\noindent \em Proof of Theorem 1.1.}}{\hfill $\Box$\par}
\newenvironment{proof of theorem 1.2}{{\noindent \em Proof of Theorem 1.2.}}{\hfill $\Box$\par}

\DeclareSymbolFont{EulerExtension}{U}{euex}{m}{n}
\DeclareMathSymbol{\euintop}{\mathop} {EulerExtension}{"52}
\DeclareMathSymbol{\euointop}{\mathop} {EulerExtension}{"48}

\begin{document}
	\title{On a new class of $q$-fractional integral operators involving multivariate $q$-Mittag-Leffler functions}
	\author{
		Min-Jie Luo \orcidlink{0000-0001-7433-4490} $^{\rm 1}$\thanks{Corresponding author}~, 
		Qing-Hui Peng $^{\rm 2}$, Ravinder Krishna Raina $^{\rm 3}$} 
	\date{}
	\maketitle
	\begin{center}\small
		$^{1}$\emph{School of Mathematics and Statistics, Donghua University,\\ 
			Shanghai 201620, People's Republic of China.}\\
		E-mail: \url{mathwinnie@live.com}, \url{mathwinnie@dhu.edu.com}
	\end{center}
	\begin{center}\small
		$^{2}$\emph{School of Mathematics and Statistics, Donghua University,\\ 
			Shanghai 201620, People's Republic of China.}\\
		E-mail: \url{1213785099@qq.com}
	\end{center}
	\begin{center}\small
		$^{3}${\emph{M.P. University of Agriculture and Technology, Udaipur (Rajasthan), India\\
				\emph{Present address:} 10/11, Ganpati Vihar, Opposite Sector 5,\\
				Udaipur-313002, Rajasthan, India.}\\
			E-mail: \url{rkraina_7@hotmail.com}}
	\end{center}
	
	
	\begin{abstract}
		Fractional integral operators involving special functions as kernels have long been one of the main subjects of fractional calculus. As a discrete version of classical fractional calculus, $q$-fractional calculus not only brings often new phenomena and new perspectives but also provides new useful areas for describing and studying new problems of applied nature. Recently, Luo and Zhou (see below) developed a methodology for formulating well-behaved $q$-fractional integral operators steming from a basic general theory. Our aim in this paper is to define and introduce a new class of $q$-fractional integral operators whose kernel involves a new multivariate $q$-Mittag-Leffler function. We shall also define the corresponding $q$-fractional derivatives (as the inverses of $q$-fractional integral operators) and define also the $q$-Caputo fractional derivative operators. We discuss in detail the similarities and variations between the $q$-operators and their counterparts in classical theory, particularly in Prabakar fractional calculus. Further, applications of the defined operators in solving certain $q$-fractional differential equations are also considered. We also mention throughout this paper some useful relevances of our established results with certain known important cases. In the concluding remarks, we point out briefly few possible proposals (or directions) for further work on the present subject matter of $q$-calculus.
		
		\noindent\textbf{Keywords}: 
		Prabhakar derivative; 
		$q$-binomial theorem; 
		$q$-fractional integral operator; 
		$q$-Laplace transform; 
		$q$-Mittag-Leffler function.   
		\\
		
		\noindent\textbf{Mathematics Subject Classification (2020)}:
		26A33; 
		33D05; 
		33E12; 
		44A10. 
	\end{abstract}
	
	\tableofcontents 	
	
	\section{Introduction}\label{Introduction}
	
	\subsection{Background and motivation}\label{Introduction-1}

Originating from the works of Agarwal \cite{Agarwal-1969}, Al-Salam \cite{AlSalam-1966} and others, the theory of $q$-fractional calculus has after more than half a century of development become an independent subject and has attracted increasing attention from researchers in recent years (see \cite{AlTowailb-2019}, \cite{Annaby-Mansour-Book-2012}, \cite{Bouzeffour-2026},  \cite{Castro-2025}, \cite{Castro-2026},  \cite{Maligaranda-Oinarov-Persson-2014} and \cite{Zhang-Guo-2020}). 

The applications of $q$-fractional calculus to the problems of physical and applied sciences have been reported in few papers. We would like here to particularly mention one recent study. In 1988, Tsallis \cite{Tsallis-1988} proposed, as a generalization of Boltzmann--Gibbs statistics, the following entropy functional:
\begin{equation}\label{Def-TsalliE}
	S_q=k\frac{1-\sum_{i=1}^{W}p_i^q}{q-1},
\end{equation}
where $W$ is the total number of (microscopic) configurations, $\{p_i\}$ is the associated probabilities with $\sum_{i=1}^{W}p_i=1$, $k$ is a conventional positive constant, and $q\in\mathbb{R}$ is a nonextensive parameter. From the viewpoint of $q$-calculus, \eqref{Def-TsalliE} can be interpreted as the action of $q$-derivative operator $D_q$ (see below \eqref{Def-q-derivative}) acting on an entropy generating function, i.e.,  
\[
S_q=-k D_q\sum_{i=1}^{W}p_i^{x}\bigg|_{x=1}.
\]

In 2026, Gonzalez and Bayron \cite{Gonzalez-Bayron-2026} generalized the above idea by replacing $D_q$ with the $q$-Caputo fractional derivative ${}^{C}D_q^{\alpha}$ (see below \eqref{Def-CaputoQFD}), thereby obtaining a useful generalization of the Tsallis entropy functional with parameters $q$ and $\alpha$, in which 
$q$ still plays the role of the nonextensive index, while 
$\alpha$ provides a nonlocal memory-like contribution.

The main objects of study in $q$-fractional calculus are various $q$-fractional integral and derivative operators (including the $q$-Caputo fractional derivative) together with their defining spaces of functions and conditions of existence associated with them. Consequently , as in the classical fractional calculus, the main central problem is how to define useful $q$-fractional operators. Compared to the classical fractional calculus, the defining $q$-fractional calculus operators are still very few. This is perhaps due to the computational complexities arising from discrete objects and also due to a lack of a general framework for generating useful operators. In a recent work of Luo and Zhou \cite{Luo-Zhou-2026}, a general framework was, however, developed. Based on the summability of discrete convolutions of sequences as a criterion, this framework can explicitly suggest how to construct new well-behaved $q$-fractional operators. Therefore, the aim of this paper is to study a new class of $q$-fractional integral operators whose kernel involves a new multivariate $q$-Mittag-Leffler function as suggested by Luo--Zhou in their paper \cite{Luo-Zhou-2026}. We shall further define the corresponding $q$-fractional derivatives as their inverses and consider various results related to the $q$-calculus operators introduced in this paper. We also study some applications to the $q$-fractional differential equations. 

Before introducing our theory, let us first introduce some basic concepts from $q$-fractional calculus as well as some convenient notations used in the sequel. 

\subsection{Set of definitions of $q$-fractional calculus}\label{Introduction-2}

In the present investigation, we always assume that $0<q<1$. For any $a\in \mathbb{C}$, the $q$-shifted factorial $(a;q)_n$ is defined by
\begin{align}
	(a; q)_n:=
	\begin{cases} 
		1, & n = 0, \\
		\displaystyle \prod_{j=0}^{n-1} (1 - a q^j), & n\in \mathbb{Z}_{\geq1},
	\end{cases}
\end{align}
where $\mathbb{Z}_{\geq m}=\{n\in\mathbb{Z}; n\geq m\}$. Other notations such as $\mathbb{Z}_{\leq m}$ are understood analogously, and we also write 
\[
(a;q)_{\infty}:=\prod_{j=0}^{\infty}(1-aq^{j}).
\]
For any $\alpha\in\mathbb{C}$, we define
\[
(a;q)_\alpha:=\frac{(a;q)_\infty}{(a q^\alpha; q)_\infty},
\]
and for any $x \in \mathbb{C} \setminus \mathbb{Z}_{\leq 0}$, the $q$-gamma function $\Gamma_q(x)$ is defined by \cite[p. 22]{Annaby-Mansour-Book-2012}:
\begin{align}\label{Def-QGamma}
	\Gamma_q(x) := \frac{(q; q)_\infty}{(q^x; q)_\infty} (1-q)^{1-x}  = (q;q)_{x-1}(1-q)^{1-x}. 
\end{align}
It immediately follows from \eqref{Def-QGamma} that $\Gamma_q(1)=1$ and $\displaystyle \lim_{q \to 1^-} \Gamma_q(x) = \Gamma(x)$.

The $q$-derivative of a function $f$ is defined by (\cite[Chapter 1]{Kac-Cheung-Book-2002}; see also \cite[p. 27]{Gasper-Rahman-Book-2004})
\begin{equation}\label{Def-q-derivative}
	D_q f(x):= \frac{f(x)-f(qx)}{(1-q)x}, \quad x \neq 0.
\end{equation}
Obviously, $\displaystyle\lim_{q\rightarrow1^{-}}D_{q}f(x)=f'(x)$ if $f$ is differentiable at $x$. Higher-order $q$-derivatives $D_q^{n}$ are defined recursively by 
\[
D_q^n f(x):=D_q(D_q^{n-1}f(x)) \quad (n\in\mathbb{Z}_{\geq1})
\] 
with the convention that $D_q^0 f(x):= f(x)$. The $q$-integral of $f$ is defined by (\cite[p. 69]{Kac-Cheung-Book-2002}; see also \cite[p. 23]{Gasper-Rahman-Book-2004})
\begin{equation}\label{Def-q-integral}
	I_qf(x)\equiv \int_0^x f(t)\mathrm{d}_q t:= (1-q) \sum_{n=0}^\infty  xq^n f(x q^n)\quad (x>0).
\end{equation}
From \eqref{Def-q-derivative} and \eqref{Def-q-integral}, we have the relation: $D_q I_q f(x)=f(x)$.

If $f(x)$ is right-continuous at $0$, then $I_q D_q f(x)=f(x)-f(0)$. Here, the right-continuity condition at $0$ plays the role of the notion of being \emph{$q$-regular at zero} introduced in \cite[p. 7, Definition 1.3.2]{Annaby-Mansour-Book-2012}. If $f$ is Riemann integrable, then
\[
\lim_{q\rightarrow 1^{-}}I_{q}f(x)=\int_0^x f(t)\mathrm{d}t.
\]
Similarly, we define $I_q^n$ by
\[
I_q^nf(x):= I_q(I_q^{n-1}f(x))\quad (n\in\mathbb{Z}_{\geq1}), 
\]
where $I_q^0 f(x):= f(x)$.

The Riemann-Liouville $q$-fractional integral operator is defined by (\cite[Eqs. 
(4.24) and (4.26)]{Annaby-Mansour-Book-2012})
\begin{equation}\label{Def-qRLFIO}
	I_q^\lambda f(x)
	:=\frac{x^{\lambda-1}}{\Gamma_q(\lambda)} 
	\int_0^x (qt/x;q)_{\lambda-1} f(t)\mathrm{d}_q t
\end{equation}
with $I_q^0 f(x) = f(x)$. It may be pointed out here that there is another symbolic system in which the $q$-analogue of the power function $(t-s)^{\lambda}$ is denoted by $(t-s)_q^{\lambda}$, namely, $(x-t)_q^{\lambda}:=x^{\lambda}(t/x;q)_{\lambda}$.  Nevertheless, for the convenience of readers from various disciplines, we retain the $q$‑Pochhammer notation instead of 
$(x-t)_q^{\alpha}$. Note that if $\lambda>0$ and $f$ is Riemann integrable, the following limit relation holds \cite[p. 129, Lemma 4.12]{Annaby-Mansour-Book-2012}:
\begin{align}
	\lim_{q\to 1^-} I_q^\lambda f(x)=I^\lambda f(x),
\end{align}
where $I^\lambda$ denotes the classical Riemann-Liouville fractional integral operator \cite[Chapter 2]{Kilbas-Srivastava-Trujillo-Book-2006}. 

The application of the Babenko method (namely, the inverse operator method) to the $q$-Abel integral equation (see \cite[p. 2771]{Luo-Raina-Zhou-2025}; see also \cite{Luo-Zhou-2026}):
\[
f(x)+a I_q^\lambda f(x)=g(x)
\]
naturally gives rise to the $q$-Mittag-Leffler function defined by 
\begin{equation}\label{Def-qMittagLeffler-1}
	E_{\lambda,\rho}^{q}(\omega;z-c):=\sum_{n=0}^{\infty}\frac{\omega^n}{\Gamma_{q}(\lambda+\rho n)}\mathcal{E}_{z}^{-c}\left\{z^{\rho n}\right\}, 
\end{equation}
where $\mathcal{E}_{z}^{-c}$ is the $q$-translation operator such that $\mathcal{E}_{z}^{-c}\left\{z^{\rho n}\right\}=z^{\rho n}(c/z;q)_{\rho n}$. The function \eqref{Def-qMittagLeffler-1} could be considered as a $q$-analogue of the usual two-parametric Mittag-Leffler function, and this function has been used in \cite{Abdeljawad et al-2011}, \cite{Sheng-Zhang-2022a} and \cite{Zhang-Wang-2021}. The factor $z^{\rho n}(c/z;q)_{\rho n}$ is introduced to ensure compatibility with the operations of $q$-integration when \eqref{Def-qMittagLeffler-1} appears as an integral kernel. Compared to the classical Mittag‑Leffler function, the $q$-analogues often possess more possibilities of deeper investigations (see \cite[Chapter 7]{Annaby-Mansour-Book-2012}, \cite{Purohit-Kalla-2011} and \cite[p. 604, Eqs. (2.20) and (2.21)]{Bouzeffour-2026}). However, in the present paper the multivariate $q$-Mittag‑Leffler function which we study is a direct consequence of the defining relation \eqref{Def-qMittagLeffler-1}.

Throughout the paper, boldface letters always denote
vectors of dimension $k$; for instance $\mathbf{n}:=(n_1,\cdots,n_k)\in\mathbb{Z}_{\geq0}^k$ and
$\bm{\gamma}:=(\gamma_1,\cdots,\gamma_k)\in\mathbb{C}^{k}$. The inner product of two $k$-dimensional vectors $\mathbf{u}$ and $\mathbf{v}$ is defined by $\langle \mathbf{u}, \mathbf{v} \rangle:=u_1 v_1 +\cdots +u_k v_k$. In particular, we write $\langle\mathbf{n}\rangle:= n_1+\cdots+n_k$. In addition, 
\[
q^{\bm{\gamma}}~~~\text{means}~~~(q^{\gamma_1},\cdots,q^{\gamma_k}),
\]
\[
\mathbf{x}\circ\mathbf{y}~~~\text{means}~~~(x_1 y_1,\cdots, x_k y_k),
\]
and
\[
\sum_{\mathbf{n}=\mathbf{0}}^{\infty}\Omega(\mathbf{n})~~~\text{means}~~~
\sum_{n_1,\cdots,n_k=0}^{\infty}\Omega(n_1,\cdots,n_k).
\]

\subsection{Some definitions related to Luo--Zhu theory}\label{Introduction-3}

Let us start with the following multivariate $q$-Mittag-Leffler function which is derived from the Luo--Zhou theory \cite{Luo-Zhou-2026}.

\begin{definition}
	Let $\lambda\in\mathbb{C}$ $(\Re(\lambda)>0)$, and let $\bm{\gamma}, \bm{\rho}, \bm{\omega}\in\mathbb{C}^{k}$ $(\Re(\rho_j)>0, j=1,\cdots,k)$. The multivariate $q$-Mittag-Leffler function is defined as follows:  
	\begin{align}\label{Def-MultivariateQML}
		E_{\bm{\rho},\lambda}^{\bm{\gamma}}(q; 
		\bm{\omega}; \mathbf{z},\mathbf{c})
		&:=\sum_{\mathbf{n}=\mathbf{0}}^{\infty}
		\frac{(q^{\gamma_1};q)_{n_1}}{(q;q)_{n_1}}\cdots\frac{(q^{\gamma_k};q)_{n_k}}{(q;q)_{n_k}} 
		\frac{\omega_1^{n_1} \cdots \omega_k^{n_k}}{\Gamma_q(\lambda + \langle \bm{\rho}, \mathbf{n}\rangle)}z_1^{\rho_1 n_1}\big(c_1/z_1;q\big)_{\rho_1 n_1}\notag\\ 
		&\hspace{0.5cm}\cdot z_2^{\rho_2 n_2}\big(q^{\rho_1 n_1}c_2/z_2;q\big)_{\rho_2 n_2}
		\cdots z_k^{\rho_k n_k}\big(q^{\rho_1 n_1+\cdots+\rho_{k-1} n_{k-1}}c_k/z_k;q\big)_{\rho_k n_k},
	\end{align}
	where $0\leq c_j<z_j$ for $j=1,\cdots,k$. 
\end{definition}
\begin{remark}
	In terms of the $q$-translation operator $\mathcal{E}_{z}^{-c}$ defined in subsection \ref{Introduction-2}, the defining formula \eqref{Def-MultivariateQML} can be written in a more compact form: 
	\begin{align*}
		E_{\bm{\rho},\lambda}^{\bm{\gamma}}(q; 
		\bm{\omega}; \mathbf{z},\mathbf{c})
		&:=\sum_{\mathbf{n}=\mathbf{0}}^{\infty} 
		\frac{\omega_1^{n_1} \cdots \omega_k^{n_k}}{\Gamma_q(\lambda + \langle \bm{\rho}, \mathbf{n}\rangle)}
		\prod_{j=1}^{k}
		\frac{(q^{\gamma_j};q)_{n_j}}{(q;q)_{n_j}}
		\mathcal{E}_{z_j}^{-c_j q^{\rho_1 n_1+\cdots+\rho_{j-1}n_{j-1}}}\left\{z_j^{\rho_j n_j}\right\}.
	\end{align*}
	Thus, our function \eqref{Def-MultivariateQML} is obviously a multivariate generalization of \eqref{Def-qMittagLeffler-1}.
\end{remark}

By setting $\mathbf{c}=\mathbf{0}$ and $\bm{\omega}=\mathbf{1}$ in \eqref{Def-MultivariateQML}, and then letting $z_j\to z_j^{1/\rho_j}$ $(j=1,\cdots,k)$ in the resulting formula, we obtain
\begin{equation}\label{Def-MultivariateQML-1}
	E_{\bm{\rho},\lambda}^{\bm{\gamma}}(q;\mathbf{z})
	:=\sum_{\mathbf{n}=\mathbf{0}}^{\infty}
	\frac{(q^{\gamma_1};q)_{n_1}}{(q;q)_{n_1}}\cdots\frac{(q^{\gamma_k};q)_{n_k}}{(q;q)_{n_k}} 
	\frac{z_1^{n_1} \cdots z_k^{n_k}}{\Gamma_q(\lambda + \langle \bm{\rho}, \mathbf{n}\rangle)},
\end{equation}
which is evidently a $q$-analogue of the multivariate Mittag-Leffler function defined by (\cite[p. 536, Eq. (1.14)]{Saxena-Kalla-Saxena-2011})
\begin{equation}\label{Def-MultivariateML}
	E_{\bm{\rho},\lambda}^{\bm{\gamma}}(\mathbf{z})
	:=\sum_{\mathbf{n}=\mathbf{0}}^{\infty}
	\frac{(\gamma_1)_{n_1}}{n_1!}\cdots\frac{(\gamma_k)_{n_k}}{n_k!} 
	\frac{z_1^{n_1} \cdots z_k^{n_k}}{\Gamma(\lambda + \langle \bm{\rho}, \mathbf{n}\rangle)}.
\end{equation}
A detailed analysis of the function defined by \eqref{Def-MultivariateQML-1}, including its absolute convergence, will be given in Section \ref{Section-2}.

It is worth mentioning that the function \eqref{Def-MultivariateML} has been applied in some branches of mathematics. For example, Cinque \cite{Cinque-2021} uses it to represent the probability density of the sum of $N$ independent generalized Mittag-Leffler distributed random variables, and later, Cinque and Orsingher \cite{Cinque-Orsingher-2024} use this function in their study on certain types of fractional Cauchy problems with Dzherbashyan--Caputo derivatives. In addition, Sakhno and Storozhuk \cite{Sakhno-Storozhuk-2026} use this function in their study on the fractional Poisson process and the generalized fractional counting processes. 

The case when  $z_1=\cdots=z_k=z$ and $c_1=\cdots=c_k=c$ in \eqref{Def-MultivariateQML} produces the kernel of our new $q$-fractional integral operators. We denote it by
\begin{equation}\label{Def-MultivariateQML-2}
	E_{\bm{\rho},\lambda}^{\bm{\gamma}}(q; 
	\bm{\omega}; z,c)
	:=\sum_{\mathbf{n}=\mathbf{0}}^{\infty}
	\frac{(q^{\gamma_1};q)_{n_1}}{(q;q)_{n_1}}\cdots\frac{(q^{\gamma_k};q)_{n_k}}{(q;q)_{n_k}} 
	\frac{\omega_1^{n_1} \cdots \omega_k^{n_k}}{\Gamma_q(\lambda + \langle \bm{\rho}, \mathbf{n}\rangle)}z^{\langle\bm{\rho}, \mathbf{n}\rangle}(c/z;q)_{\langle\bm{\rho}, \mathbf{n}\rangle}.
\end{equation}

\begin{definition}
	Let $\lambda\in\mathbb{C}$ $(\Re(\lambda)>0)$, and let $\bm{\gamma}, \bm{\rho}, \bm{\omega}\in\mathbb{C}^{k}$ $(\Re(\rho_j)>0, j=1,\cdots,k)$. The $q$-fractional integral operator $E_{q;\bm{\rho},\lambda;(\bm{\omega})}^{\bm{\gamma}}$ is defined by 
	\begin{equation}\label{Def-OurOperator}
		\big(E_{q;\bm{\rho},\lambda;(\bm{\omega})}^{\bm{\gamma}}f\big)(x)
		:=\int_{0}^{x} x^{\lambda-1} (qt/x;q)_{\lambda-1} E_{\bm{\rho},\lambda}^{\bm{\gamma}}\big(q; \bm{\omega}; x, q^\lambda t\big)f(t)\mathrm{d}_q t,
	\end{equation}
	where $x>0$. 
\end{definition}

We first point out that the operator \eqref{Def-OurOperator} provides a $q$-analogue of the fractional integral operator \cite[p. 540, Eq. (4.1)]{Saxena-Kalla-Saxena-2011}:
\begin{equation} \label{Def-SaxenaOperator}
	\big(E_{\bm{\rho},\lambda;(\bm{\omega})}^{\bm{\gamma}}f\big)(x)
	:=\int_{0}^{x} (x-t)^{\lambda-1} E_{\bm{\rho},\lambda}^{\bm{\gamma}}
	\big[\omega_1(x-t)^{\rho_1},\cdots, \omega_k(x-t)^{\rho_k}\big]f(t) \mathrm{d}t,
\end{equation}
where $\lambda\in\mathbb{C}$ $(\Re(\lambda)>0)$ and  $\bm{\gamma},\bm{\rho},\bm{\omega}\in\mathbb{C}^k$ $(\Re(\rho_j)>0, j=1,\cdots,k)$. This operator contains the well-known Prabhakar fractional integral operator as a special case. The theory built around the Prabhakar fractional operators, namely \emph{Prabhakar fractional calculus} \cite{Giusti et al-2020} has become an important independent research topic in the field of fractional calculus.

In addition, our operator \eqref{Def-OurOperator} contains the following $q$-fractional integral operator of Prabhakar-type proposed in 2025  by Shaimardan \emph{et al.} \cite[p. 2008, Eq. (17)]{Shaimardan-Karimov-Ruzhansky-Mamanazarov-2025} as a special case: 
\begin{equation}\label{Def-ShaimardanOperator}
	(E_{q;\rho,\lambda;(\omega)}^\gamma f)(x)
	:=\int_0^x x^{\lambda-1}(qt/x;q)_{\lambda-1} E_{\rho,\lambda}^\gamma(q;\omega;x,q^\lambda t)f(t)\mathrm{d}_q t,
\end{equation}
where
\[
E_{\rho,\lambda}^{\gamma}(q; 
\omega; z,c):=\sum_{n=0}^{\infty}
\frac{(q^{\gamma};q)_{n}}{(q;q)_{n}} 
\frac{\omega^n}{\Gamma_q(\lambda + \rho n)}z^{\rho n}(c/z;q)_{\rho n},
\]
is a special case of \eqref{Def-MultivariateQML-2}. One may also refer to the paper \cite{Mamanazarov-2024} for a concise introduction \textcolor{blue}{to} the theory of \eqref{Def-ShaimardanOperator}. 

The paper is organized as follows. In Section \ref{Section-2}, we study various properties of our new multivariate $q$-Mittag-Leffler function $E_{\bm{\rho},\lambda}^{\bm{\gamma}}(q;\mathbf{z})$. In Section \ref{Section-3}, we characterize the operator $E_{q;\bm{\rho},\lambda;(\bm{\omega})}^{\bm{\gamma}}$ and construct its left and right inverses. Building on these results, we introduce the corresponding $q$-fractional derivatives. In Section \ref{Section-4}, we study the eigenvalue problems for the new $q$-fractional derivatives introduced in Section \ref{Section-3}. By using the $q$-Laplace transform, we obtain explicit solutions to these problems. Finally, in Section \ref{Section-5}, we outline some possible directions for future research.

\section{Properties of the multivariate $q$-Mittag-Leffler functions}\label{Section-2}

\subsection{Absolutely convergence}

Unlike the non-$q$ case, the multivariable $q$-Mittag-Leffler function defined by \eqref{Def-MultivariateQML-1} is \emph{not} an entire function. This is a very notable difference. The following theorem characterizes the domain of convergence of series \eqref{Def-MultivariateQML-1} in $\mathbb{C}^k$. 

\begin{theorem}\label{Th-Convergence} 
	Let $\lambda\in\mathbb{C}$, and let $\bm{\gamma}, \bm{\rho}\in\mathbb{C}^{k}$ $(\Re(\rho_j)>0, j=1,\cdots,k)$. The series in \eqref{Def-MultivariateQML-1} is absolutely convergent in the polydisck 
	\[
	\mathbb{D}:=\left\{\mathbf{z}\in\mathbb{C}^k; |z_j|<(1-q)^{-\Re(\rho_j)}, j=1,\cdots,k\right\}.
	\]
\end{theorem}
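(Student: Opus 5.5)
The plan is to bound the general term of \eqref{Def-MultivariateQML-1} by a product of one-variable geometric terms. Then absolute convergence in $\mathbb{D}$ follows from a comparison with a product of convergent geometric series.

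\textbf{Step 1: the $q$-binomial factors.} Since $0<q<1$, each such factor is bounded by a constant. Indeed,
\[
\frac{(q^{\gamma_j};q)_{n_j}}{(q;q)_{n_j}}\le \frac{\prod_{i\ge0}(1+|q^{\gamma_j}|q^i)}{(q;q)_\infty}=:C_j.
\]
The numerator product converges because $|q^{\gamma_j+i}|=q^{\Re(\gamma_j)+i}$ is summable in $i$. The denominator uses $(q;q)_{n}\ge (q;q)_\infty>0$.

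\textbf{Step 2: the $q$-gamma factor, which is the key step.} By \eqref{Def-QGamma}, with $s=\lambda+\langle\bm\rho,\mathbf n\rangle$,
\[
\frac{1}{\Gamma_q(s)}=\frac{(q^{s};q)_\infty}{(q;q)_\infty}(1-q)^{s-1}.
\]
We have $|q^{s}|=q^{\Re(\lambda)+\langle\Re\bm\rho,\mathbf n\rangle}\le q^{\Re(\lambda)}$, since $\Re(\rho_j)>0$. Hence
\[
|(q^s;q)_\infty|\le\prod_{i\ge0}\bigl(1+q^{\Re(\lambda)+i}\bigr)=:M,
\]
uniformly in $\mathbf n$. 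Moreover $|(1-q)^{s-1}|=(1-q)^{\Re(\lambda)-1}\prod_j (1-q)^{\Re(\rho_j)n_j}$. One caveat: if $\lambda+\langle\bm\rho,\mathbf n\rangle$ hits a pole of $\Gamma_q$, then $1/\Gamma_q$ is interpreted as $0$ via the displayed formula, which is entire in $s$. So no restriction on $\lambda$ is needed.

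\textbf{Step 3: comparison.} Combining Steps 1 and 2, the absolute value of the general term is at most
\[
\frac{M(1-q)^{\Re(\lambda)-1}}{(q;q)_\infty}\prod_{j=1}^k C_j\bigl(|z_j|(1-q)^{\Re(\rho_j)}\bigr)^{n_j}.
\]
In $\mathbb{D}$ each ratio $r_j:=|z_j|(1-q)^{\Re(\rho_j)}$ is less than $1$. The multiple series therefore converges absolutely, being dominated by $\prod_j\sum_{n_j}r_j^{n_j}=\prod_j(1-r_j)^{-1}$.

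The same bound gives uniform convergence on compact subsets of $\mathbb{D}$, since on such a set one can take $r_j\le r<1$. The only real obstacle is Step 2: one must notice that $(q^s;q)_\infty$ stays bounded because $\Re(\rho_j)>0$. The exponential factor $(1-q)^{\langle\Re\bm\rho,\mathbf n\rangle}$ then exactly determines the polyradius of $\mathbb{D}$.
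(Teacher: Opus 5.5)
Your proposal is correct and follows essentially the same route as the paper: bound the $q$-binomial factors uniformly in $\mathbf{n}$, bound $|(q^{\lambda+\langle\bm{\rho},\mathbf{n}\rangle};q)_\infty|$ by $(-q^{\Re(\lambda)};q)_\infty$ so that $1/|\Gamma_q(\lambda+\langle\bm{\rho},\mathbf{n}\rangle)|$ is dominated by a constant times $\prod_j(1-q)^{\Re(\rho_j)n_j}$, and compare with a product of geometric series. The differences are only cosmetic. You give an explicit constant $C_j$, whereas the paper gets boundedness from the limit $(q^{\gamma_j};q)_\infty/(q;q)_\infty$. You also add the remarks on zeros of $1/\Gamma_q$ and on uniform convergence on compact subsets. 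In Step 1 you should put an absolute value on the left-hand side, since $(q^{\gamma_j};q)_{n_j}$ is complex in general.
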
 
\begin{proof}
	Since $0<q<1$, we have 
	\[
	\frac{(q^{\gamma_j};q)_{\infty}}{(q;q)_{\infty}}
	=\lim_{n_j\rightarrow+\infty} \frac{(q^{\gamma_j};q)_{n_j}}{(q;q)_{n_j}}=:C_j \quad (j=1,\cdots,k),
	\]
	and each $C_j$ satisfies $0\leq |C_j|<\infty$. Then there exists a constant $M_1>0$ such that for all $\mathbf{n}\in\mathbb{Z}_{\geq0}^{k}$:
	\[
	\left|\prod_{j=1}^{k}\frac{(q^{\gamma_j};q)_{n_j}}{(q;q)_{n_j}} \right| \leq M_1.
	\]
	Thus, it remains to check the convergence of multiple series
	\begin{equation}\label{Th-Convergence-Proof-1} 
		\sum_{\mathbf{n}=\mathbf{0}}^{\infty}
		\frac{|z_1|^{n_1} \cdots |z_k|^{n_k}}{|\Gamma_q(\lambda + \langle \bm{\rho}, \mathbf{n}\rangle)|}.
	\end{equation}
	
	In view of \eqref{Def-QGamma}, we have
	\[
	|\Gamma_q(\lambda + \langle \bm{\rho}, \mathbf{n}\rangle)|
	=\frac{(q;q)_{\infty}}{|(q^{\lambda + \langle \bm{\rho}, \mathbf{n}\rangle};q)_{\infty}|}(1-q)^{1-\Re(\lambda)-\Re(\langle \bm{\rho}, \mathbf{n}\rangle)},
	\]
	where
	\begin{align*}
		\big|(q^{\lambda + \langle \bm{\rho}, \mathbf{n}\rangle};q)_{\infty}\big|
		=\prod_{\ell=0}^{\infty}\big|1-q^{\lambda + \langle \bm{\rho}, \mathbf{n}\rangle+\ell}\big|
		&\leq \prod_{\ell=0}^{\infty}\big(1+q^{\Re(\lambda)+\Re(\langle\bm{\rho}, \mathbf{n}\rangle)+\ell}\big)\\
		&\leq \prod_{\ell=0}^{\infty}\big(1+q^{\Re(\lambda)+\ell}\big)
		=(-q^{\Re(\lambda)};q)_{\infty}.
	\end{align*}
	We obtain therefore the following useful inequality:
	\begin{equation}\label{Th-Convergence-Proof-2} 
		\frac{1}{|\Gamma_q(\lambda + \langle \bm{\rho}, \mathbf{n}\rangle)|}
		\leq M_2\prod_{j=1}^{k}(1-q)^{\Re(\rho_j)n_j},
	\end{equation}
	where $\lambda\in\mathbb{C}$, $\mathbf{n}\in\mathbb{Z}_{\geq0}^{k}$,  $\bm{\rho}\in\mathbb{C}^{k}$ $(\Re(\rho_j)>0, j=1,\cdots,k)$, and
	\[
	M_2:=\frac{(-q^{\Re(\lambda)};q)_{\infty}}{(q;q)_{\infty}}
	(1-q)^{\Re(\lambda)-1}.
	\]
	Consequently, we have then
	\[
	\sum_{\mathbf{n}=\mathbf{0}}^{\infty}
	\frac{|z_1|^{n_1} \cdots |z_k|^{n_k}}{|\Gamma_q(\lambda + \langle \bm{\rho}, \mathbf{n}\rangle)|}
	\leq M_2
	\prod_{j=1}^{k} 
	\left(\sum_{n_j=0}^{\infty}
	|z_j|^{n_j}(1-q)^{\Re(\rho_j) n_j}\right). 
	\]
	Thus the series \eqref{Th-Convergence-Proof-1} converges absolutely when each component of $\mathbf{z}$ satisfies $|z_j|<(1-q)^{-\Re(\rho_j)}$ $(j=1,\cdots,k)$. The proof is complete. 
\end{proof}
\begin{remark}
	~
	\begin{itemize}
		\item[(1)] As $q\rightarrow1^{-}$, we see that the domain of convergence mentioned in Theorem \ref{Th-Convergence} gradually expands to the whole $\mathbb{C}^k$. When $k=1$, the series \eqref{Def-MultivariateQML-1} reduces to 
		\[
		E_{\rho,\lambda}^{\gamma}(q;z)
		:=\sum_{n=0}^{\infty}
		\frac{(q^{\gamma};q)_{n}}{(q;q)_{n}}
		\frac{z^n}{\Gamma_q(\lambda + \rho n)},
		\]
		where $z\in\mathbb{C}$ satisfies $|z(1-q)^\rho|<1$. This convergence result is consistent with existing results in the literature \cite[p. 2006, Definition 2.1]{Shaimardan-Karimov-Ruzhansky-Mamanazarov-2025}. 
		\item[(2)] Letting $\bm{\rho}=\mathbf{1}=(1,\cdots,1)$ in \eqref{Def-MultivariateQML-1} and then using 
		$\Gamma_{q}(\lambda+\langle\mathbf{n}\rangle)=\Gamma_{q}(\lambda)
		(q^\lambda;q)_{\langle\mathbf{n}\rangle}(1-q)^{-\langle\mathbf{n}\rangle}$, we obtain
		\[
		E_{\mathbf{1},\lambda}^{\bm{\gamma}}\big(q;(1-q)^{-1}\mathbf{z}\big)
		=\frac{1}{\Gamma_q(\lambda)}\Phi_2^{(k)}\big[q^{\gamma_1},\cdots,q^{\gamma_k};q^{\lambda};q;\mathbf{z}\big], 
		\]
		where
		\[
		\Phi_2^{(k)}\big[q^{\gamma_1},\cdots,q^{\gamma_k};q^{\lambda};q;\mathbf{z}\big]:=
		\sum_{\mathbf{n}=\mathbf{0}}^{\infty}
		\frac{(q^{\gamma_1};q)_{n_1}\cdots(q^{\gamma_k};q)_{n_k}}{(q^\lambda;q)_{\langle\mathbf{n}\rangle}}
		\frac{z_1^{n_1}}{(q;q)_{n_1}}\cdots\frac{z_k^{n_k}}{(q;q)_{n_k}}
		\]
		is a $q$-analogue of Humbert's function \cite[p. 34, Eq. (8)]{Srivastava-Karlsson-Book-1985}
		\[
		\Phi_2^{(k)}\big[\gamma_1,\cdots,\gamma_k;\lambda;\mathbf{z}\big]:=
		\sum_{\mathbf{n}=\mathbf{0}}^{\infty}
		\frac{(\gamma_1)_{n_1}\cdots(\gamma_k)_{n_k}}{(\lambda)_{\langle\mathbf{n}\rangle}}
		\frac{z_1^{n_1}}{n_1!}\cdots\frac{z_k^{n_k}}{n_k!}\quad (\mathbf{z}\in\mathbb{C}^k).
		\]
		This theorem therefore determines that the domain of convergence of  $\Phi_2^{(k)}\big[q^{\gamma_1},\cdots,q^{\gamma_k};q^{\lambda};q;\mathbf{z}\big]$ is a polydisc $|z_1|<1,\cdots,|z_k|<1$ in $\mathbb{C}^k$. Ernst \cite[Definition 2.6]{Ernst-2023} considered the two-dimensional case of \eqref{Def-MultivariateQML-1}, which was denoted by $\Upsilon_2$ and it was stated that its domain of convergence is $|z_1|<\infty$, $|z_2|<\infty$, which does not appear to be a correct observation. To examine this, we consider the case in which $k=2$, $|z_1|>1$ and $z_2=0$ in \eqref{Def-MultivariateQML-1}. In this case, the series $q$-analogue of Humbert's function (mentioned above) reduces further to 
		\begin{equation}\label{Th-Convergence-Remark-1} 
			\sum_{n_1=0}^{\infty}
			\frac{(q^{\gamma_1};q)_{n_1}}{(q^\lambda;q)_{n_1}}
			\frac{z_1^{n_1}}{(q;q)_{n_1}}.
		\end{equation}
		Since 
		\[
		\frac{(q^{\gamma_1};q)_{n_1}}{(q^\lambda;q)_{n_1}}
		\frac{z_1^{n_1}}{(q;q)_{n_1}}\sim \frac{(q^{\gamma_1};q)_{\infty}}{(q^\lambda;q)_{\infty}(q;q)_{\infty}}z_1^{n_1}\quad (n_1\rightarrow+\infty),
		\]
		the series \eqref{Th-Convergence-Remark-1} is clearly divergent for $|z_1|>1$.
	\end{itemize}
\end{remark}

\begin{corollary}\label{Cor-Convergence} 
	Let $\lambda\in\mathbb{C}$, and let $\mathbf{c}, \bm{\gamma}, \bm{\rho}, \bm{\omega}\in\mathbb{C}^{k}$ $(\Re(\rho_j)>0, j=1,\cdots,k)$. The series in \eqref{Def-MultivariateQML} is absolutely convergent in the hyperrectangle
	\[
	\mathbb{D}_{\mathbf{c}}:=\left\{\mathbf{z}\in\mathbb{R}^k ;|c_j|< z_j<\frac{1}{1-q}|\omega_j|^{-1/\Re(\rho_j)},j=1,\cdots,k\right\}.
	\]
\end{corollary}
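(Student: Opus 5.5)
Our plan is to reduce the corollary to the estimates in the proof of Theorem \ref{Th-Convergence}. The only new ingredient is a uniform bound on the $q$-shifted factorials attached to the powers of $z_j$. Fix $\mathbf{z}\in\mathbb{D}_{\mathbf{c}}$. For the $j$-th factor in \eqref{Def-MultivariateQML} we put $a_j:=q^{\rho_1n_1+\cdots+\rho_{j-1}n_{j-1}}c_j/z_j$ and write
\[
\big|z_j^{\rho_jn_j}(a_j;q)_{\rho_jn_j}\big|
= z_j^{\Re(\rho_j)n_j}e^{-\Im(\rho_j)n_j\arg z_j}\,\frac{|(a_j;q)_\infty|}{|(a_jq^{\rho_jn_j};q)_\infty|}.
\]
Since $z_j>0$ is real, we take the principal branch, so $\arg z_j=0$ and $|z_j^{\rho_jn_j}|=z_j^{\Re(\rho_j)n_j}$.

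\textbf{Step 1: bounding the ratio of infinite products.} Because $|c_j|<z_j$, every argument has modulus $|a_jq^{s}|\le |c_j|/z_j=:r_j<1$ whenever $\Re(s)\ge 0$. The numerator then satisfies $|(a_j;q)_\infty|\le(-r_j;q)_\infty$. For the denominator we use
\[
|(b;q)_\infty|\ge\prod_{\ell\ge0}(1-|b|q^\ell)\ge (r_j;q)_\infty>0
\qquad\text{whenever } |b|\le r_j .
\]
Hence
\[
\big|(a_j;q)_{\rho_jn_j}\big|\le \frac{(-r_j;q)_\infty}{(r_j;q)_\infty}=:K_j,
\]
uniformly in $\mathbf{n}$. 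We must check that $|q^{\rho_1n_1+\cdots}|=q^{\Re(\rho_1)n_1+\cdots}\le1$; this holds because $\Re(\rho_i)>0$. This is exactly where the hypothesis $|c_j|<z_j$ enters, and it is the main point needing care: the denominator must be kept away from zero uniformly. The strict inequality $r_j<1$ gives this.

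\textbf{Step 2: combining with the bounds already proved.} As in the proof of Theorem \ref{Th-Convergence}, the $q$-binomial ratios are bounded by $M_1$. By \eqref{Th-Convergence-Proof-2}, the reciprocal $q$-gamma factor is bounded by $M_2\prod_j(1-q)^{\Re(\rho_j)n_j}$. The absolute value of the general term is therefore at most
\[
M_1M_2K_1\cdots K_k\prod_{j=1}^{k}\Big(|\omega_j|\,\big((1-q)z_j\big)^{\Re(\rho_j)}\Big)^{n_j}.
\]
This dominating series factorizes into $k$ geometric series. Each one converges precisely when $|\omega_j|((1-q)z_j)^{\Re(\rho_j)}<1$, that is, when $z_j<\frac{1}{1-q}|\omega_j|^{-1/\Re(\rho_j)}$. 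This is the upper constraint defining $\mathbb{D}_{\mathbf{c}}$, so absolute convergence follows by comparison.

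If some $\omega_j=0$, the upper bound is read as $+\infty$, and that factor reduces to the single term $n_j=0$.
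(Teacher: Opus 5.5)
Your proposal is correct and follows essentially the same route as the paper. Like the paper, you bound the $q$-shifted factorials uniformly in $\mathbf{n}$ by $\prod_j(-|c_j|/z_j;q)_\infty/(|c_j|/z_j;q)_\infty$, using the two-sided estimate on $|(aq^b;q)_\infty|$, and then apply the $M_1$, $M_2$ estimates from the proof of Theorem \ref{Th-Convergence} to reduce to a product of geometric series; you simply write out the final comparison step more explicitly than the paper does.
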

\begin{proof}
	Let $\alpha_j\equiv c_j/z_j$ with $|\alpha_j|<1$  $(j=1,\cdots,k)$. Note that 
	\begin{align*}
		&\big(\alpha_1;q\big)_{\rho_1 n_1}
		\big(\alpha_2q^{\rho_1 n_1};q\big)_{\rho_2 n_2}
		\cdots\big(\alpha_kq^{\rho_1 n_1+\cdots+\rho_{k-1} n_{k-1}};q\big)_{\rho_k n_k}\\
		&\hspace{0.5cm}=
		\frac{(\alpha_1;q)_{\infty}}{(\alpha_1 q^{\rho_1 n_1};q)_{\infty}}
		\frac{(\alpha_2 q^{\rho_1 n_1};q)_{\infty}}{(\alpha_2 q^{\rho_1 n_1+\rho_2 n_2};q)_{\infty}}
		\cdots
		\frac{(\alpha_k q^{\rho_1 n_1+\cdots+\rho_{k-1} n_{k-1}};q)_{\infty}}{(\alpha_k q^{\rho_1 n_1+\cdots+\rho_k n_k};q)_{\infty}}.
	\end{align*}
	Since $(|a|;q)_{\infty}\leq |(aq^{b};q)_{\infty}|\leq (-|a|;q)_{\infty}$ for $|a|<1$ and $\Re(b)\geq0$, we have 
	\begin{align*}
		&\left|\big(\alpha_1;q\big)_{\rho_1 n_1}
		\big(\alpha_2q^{\rho_1 n_1};q\big)_{\rho_2 n_2}
		\cdots\big(\alpha_k q^{\rho_1 n_1+\cdots+\rho_{k-1} n_{k-1}};q\big)_{\rho_k n_k}\right|\\
		&\hspace{0.5cm}\leq 
		\frac{(-|\alpha_1|;q)_{\infty}}{(|\alpha_1|;q)_{\infty}}
		\frac{(-|\alpha_2|;q)_{\infty}}{(|\alpha_2|;q)_{\infty}}
		\cdots
		\frac{(-|\alpha_k|;q)_{\infty}}{(|\alpha_k|;q)_{\infty}}=:M.
	\end{align*}
	Here, $M$ is a constant independent of $\mathbf{n}$. Thus, we only need to consider the convergence of the series
	\begin{align}\label{Cor-Convergence-Proof-1} 
		\sum_{\mathbf{n}=\mathbf{0}}^{\infty}
		\frac{r_1^{n_1} \cdots r_k^{n_k}}{|\Gamma_q(\lambda + \langle \bm{\rho}, \mathbf{n}\rangle)|},
	\end{align} 
	where $r_j=|\omega_j|z_j^{\Re(\rho_j)}$ $(j=1,\cdots,k)$. Making use of  the steps in the proof of Theorem \ref{Th-Convergence} to examine the series \eqref{Cor-Convergence-Proof-1}, the desired result of Corollary \ref{Cor-Convergence} thus follows immediately.
\end{proof}
According to Corollary \ref{Cor-Convergence}, we always assume in the definition \eqref{Def-OurOperator} of the operator  $E_{q;\bm{\rho},\lambda;(\bm{\omega})}^{\bm{\gamma}}$ that
\begin{equation}\label{Condition on x}
	0<x<R:=\frac{1}{1-q}\min_{1\leq j\leq k}\left\{|\omega_j|^{-1/\Re(\rho_j)}\right\}.
\end{equation}

\subsection{$q$-Laplace transform of the multivariate $q$-Mittag-Leffler functions}

The $q$-Laplace transform ${}_{q}L_{s}$ can be defined by (\cite[p. 30]{Annaby-Mansour-Book-2012})
\begin{align*}
	{}_{q}L_{s} f(x)
	&=\frac{1}{1-q}\int_{0}^{s^{-1}}E_q(-qsx)f(x)\mathrm{d}_q x\notag\\
	&=\frac{(q;q)_{\infty}}{s}\sum_{j=0}^{\infty}\frac{q^j}{(q;q)_j}f(s^{-1}q^{j})~~~(\Re(s)>0),
\end{align*}
where
\[
E_{q}(z):=\sum_{n=0}^{\infty}\frac{q^{n(n-1)/2}}{(q;q)_n}z^n=(-z;q)_{\infty}~~~(z\in\mathbb{C})
\]
is a $q$-exponential function \cite[p. 11]{Gasper-Rahman-Book-2004}. Since $\displaystyle \lim_{q\rightarrow 1^{-}}E_{q}(z(1-q))=\mathrm{e}^z$, we have
\begin{equation}
	\lim_{q\rightarrow1^{-}}(1-q)\cdot {}_{q}L_{s(1-q)}f(x)
	=\lim_{q\rightarrow1^{-}}\int_{0}^{s^{-1}(1-q)^{-1}}E_q(-qs(1-q)x)f(x)\mathrm{d}_q x
	=\mathcal{L}[f](s),
\end{equation}
where $\mathcal{L}$ denotes the usual Laplace transform. Regarding the basic properties of ${}_{q}L_{s}$, Annaby and Mansour gave a fairly comprehensive summary in their book \cite[Section 1.9]{Annaby-Mansour-Book-2012}.

\begin{proposition}\label{Prop-LaplaceT}
	Let $\lambda\in\mathbb{C}$ $(\Re(\lambda)>0)$,  $\bm{\gamma},\bm{\rho},\bm{\omega}\in\mathbb{C}^k$ $(\Re(\rho_j)>0, j=1,\cdots,k)$. Then, for the function defined by \eqref{Def-MultivariateQML-1}, we have
	\begin{align}\label{Prop-LaplaceT-1}
		&{}_{q}L_{s}z^{\lambda-1}E_{\bm{\rho},\lambda}^{\bm{\gamma}}(q;\omega_1 z^{\rho_1},\cdots,\omega_k z^{\rho_k})\notag\\
		&\hspace{0.5cm}=s^{-\lambda}(1-q)^{\lambda-1}\prod_{j=1}^{k}
		\frac{(q^{\gamma_j}\omega_j
			s^{-\rho_j}(1-q)^{\rho_j};q)_{\infty}}{(\omega_j
			s^{-\rho_j}(1-q)^{\rho_j};q)_{\infty}},
	\end{align}
	where $s\in\mathbb{C}$ satisfies $\Re(s)>0$ and 
	$|\omega_j|(1-q)^{\Re(\rho_j)}|s^{-\rho_j}|<1$ $(j=1,\cdots,k)$.
\end{proposition}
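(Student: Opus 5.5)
Expand the integrand by \eqref{Def-MultivariateQML-1}, apply ${}_{q}L_{s}$ term by term, and resum with the $q$-binomial theorem. Writing
\[
z^{\lambda-1}E_{\bm{\rho},\lambda}^{\bm{\gamma}}(q;\omega_1 z^{\rho_1},\cdots,\omega_k z^{\rho_k})
=\sum_{\mathbf{n}=\mathbf{0}}^{\infty}\prod_{j=1}^{k}\frac{(q^{\gamma_j};q)_{n_j}}{(q;q)_{n_j}}\omega_j^{n_j}\,
\frac{z^{\lambda+\langle\bm{\rho},\mathbf{n}\rangle-1}}{\Gamma_q(\lambda+\langle\bm{\rho},\mathbf{n}\rangle)},
\]
the basic input is the $q$-Laplace transform of a power, ${}_{q}L_{s}\,x^{\beta-1}=\Gamma_q(\beta)(1-q)^{\beta-1}s^{-\beta}$ for $\Re(\beta)>0$. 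I will verify this directly from the series form of ${}_{q}L_{s}$ as a check:
\[
\frac{(q;q)_\infty}{s}\sum_{j\geq0}\frac{q^j}{(q;q)_j}s^{1-\beta}q^{j(\beta-1)}
=s^{-\beta}(q;q)_\infty\sum_{j\geq0}\frac{q^{j\beta}}{(q;q)_j}
=s^{-\beta}\frac{(q;q)_\infty}{(q^\beta;q)_\infty},
\]
using Euler's identity $\sum_j x^j/(q;q)_j=1/(x;q)_\infty$ for $|x|<1$. By \eqref{Def-QGamma}, $(q;q)_\infty/(q^\beta;q)_\infty=\Gamma_q(\beta)(1-q)^{\beta-1}$.

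Applying this with $\beta=\lambda+\langle\bm{\rho},\mathbf{n}\rangle$, the $\Gamma_q$ factors cancel and each term becomes
\[
s^{-\lambda}(1-q)^{\lambda-1}\prod_{j=1}^{k}\frac{(q^{\gamma_j};q)_{n_j}}{(q;q)_{n_j}}\big(\omega_j s^{-\rho_j}(1-q)^{\rho_j}\big)^{n_j}.
\]
Here I use $s^{-\langle\bm{\rho},\mathbf{n}\rangle}=\prod_j (s^{-\rho_j})^{n_j}$ with principal branches, which is legitimate since $\Re(s)>0$. The multiple sum then factorizes into $k$ one-dimensional sums. Each is evaluated by the $q$-binomial theorem $\sum_{n}\frac{(a;q)_n}{(q;q)_n}x^n=\frac{(ax;q)_\infty}{(x;q)_\infty}$, valid for $|x|<1$, with $a=q^{\gamma_j}$ and $x=\omega_j s^{-\rho_j}(1-q)^{\rho_j}$. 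The hypothesis $|\omega_j|(1-q)^{\Re(\rho_j)}|s^{-\rho_j}|<1$ is exactly what this requires, and it yields \eqref{Prop-LaplaceT-1}.

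The main obstacle is justifying the interchange of ${}_{q}L_{s}$ with the multiple series. The transform is an explicit series over $j$, so I need absolute convergence of the double series over $(j,\mathbf{n})$. For this I use the bound \eqref{Th-Convergence-Proof-2} on $1/|\Gamma_q|$ together with the bounded coefficient factor $M_1$ from the proof of Theorem \ref{Th-Convergence}. The evaluation points are $z=s^{-1}q^j$, so $|z^{\rho_j n_j}|$ is bounded by $|s^{-\rho_j}|^{n_j}e^{\pi|\Im\rho_j| n_j/2}$ times powers of $q^{j\Re\rho_j n_j}\leq1$.

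This bound is cruder than the stated hypothesis. The cleaner route is to sum first over $j$ with absolute values. The $j$-sum of $|$term$|$ is controlled by $\sum_j q^{j\Re(\beta)}/(q;q)_j=1/(q^{\Re\beta};q)_\infty\leq 1/(q^{\Re\lambda};q)_\infty$, which is uniform in $\mathbf{n}$. This reduces the question to absolute convergence of the $\mathbf{n}$-series with terms $|\Gamma_q(\beta)|\,|(1-q)^{\beta}s^{-\beta}|/|\Gamma_q(\beta)|$ times the coefficients, which is a product of geometric-type series. These converge under the stated condition on $|\omega_j|(1-q)^{\Re\rho_j}|s^{-\rho_j}|$, provided I handle $|s^{-\beta}|$ as $\prod_j|s^{-\rho_j}|^{n_j}\cdot|s^{-\lambda}|$. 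Fubini for double series then completes the proof.
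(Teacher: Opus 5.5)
Your proposal is correct and is essentially the paper's proof: expand the series, transform each power by ${}_{q}L_{s}x^{\beta-1}=\Gamma_q(\beta)(1-q)^{\beta-1}s^{-\beta}$, factor the $\mathbf{n}$-sum, and close each factor with the $q$-binomial theorem \eqref{Th-qBinomial}. What you add is a justification of the termwise interchange, which the paper performs without comment, and that justification is sound; in fact your first bound already suffices, because (writing $m$ for the summation index of ${}_{q}L_{s}$) one has exactly $|(s^{-1}q^{m})^{\rho_j n_j}|=|s^{-\rho_j}|^{n_j}q^{m n_j\Re(\rho_j)}$ with no factor $\mathrm{e}^{\pi|\Im(\rho_j)|n_j/2}$, so together with \eqref{Th-Convergence-Proof-2} the double series is dominated by $\sum_m q^{m\Re(\lambda)}/(q;q)_m$ times geometric series with ratios $|\omega_j|(1-q)^{\Re(\rho_j)}|s^{-\rho_j}|<1$.
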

\begin{proof}
	Recall that (\cite[p. 31, Eq. (1.88)]{Annaby-Mansour-Book-2012})
	\[
	{}_{q}L_{s} z^{\lambda-1}=s^{-\lambda}(1-q)^{\lambda-1}\Gamma_q(\lambda)~~~(\Re(s)>0).
	\] 
	Then we have  
	\begin{align}\label{Prop-LaplaceT-Proof-1}
		&{}_{q}L_{s}z^{\lambda-1}E_{\bm{\rho},\lambda}^{\bm{\gamma}}(q;\omega_1 z^{\rho_1},\cdots,\omega_k z^{\rho_k})\notag\\
		&\hspace{0.5cm}=s^{-\lambda}(1-q)^{\lambda-1}\sum_{\mathbf{n}=\mathbf{0}}^{\infty}
		\frac{(q^{\gamma_1};q)_{n_1}}{(q;q)_{n_1}}\cdots\frac{(q^{\gamma_k};q)_{n_k}}{(q;q)_{n_k}} 
		\omega_1^{n_1}\cdots\omega_k^{n_k}
		s^{-\langle\bm{\rho},\mathbf{n}\rangle}(1-q)^{\langle\bm{\rho},\mathbf{n}\rangle}\notag\\
		&\hspace{0.5cm}=s^{-\lambda}(1-q)^{\lambda-1}
		\prod_{j=1}^{k}
		\left(\sum_{n_j=0}^{\infty}
		\frac{(q^{\gamma_j};q)_{n_j}}{(q;q)_{n_j}} 
		\big(\omega_j
		s^{-\rho_j}(1-q)^{\rho_j}\big)^{n_j}\right).
	\end{align}
	The result \eqref{Prop-LaplaceT-1} now follows by summing each single series in \eqref{Prop-LaplaceT-Proof-1} by using the $q$-binomial theorem \cite[p. 8, Eq. (1.3.2)]{Gasper-Rahman-Book-2004}:
	\begin{equation}\label{Th-qBinomial}
		\sum_{n=0}^{\infty}\frac{(a;q)_n}{(q;q)_n}z^n
		=\frac{(az;q)_{\infty}}{(z;q)_{\infty}}, \quad |z|<1.
	\end{equation}
\end{proof}
\begin{remark}
	If $\bm{\rho}\in\mathbb{R}_{>0}^{k}$, the condition $s\in\mathbb{C}$ in the statement of Proposition \ref{Prop-LaplaceT} can be replaced by a concise condition, namely, 
	\begin{equation}\label{Prop-LaplaceT-2}
		\Re(s)>0
		\quad\text{and}\quad
		|s|>(1-q)\max_{1\leq j\leq k}\left\{|\omega_j|^{1/\rho_j}\right\}.
	\end{equation}
\end{remark}

If we replace $s$ by $s(1-q)$ in \eqref{Prop-LaplaceT-1}, then we obtain 
\[
(1-q)\cdot {}_{q}L_{s}z^{\lambda-1}E_{\bm{\rho},\lambda}^{\bm{\gamma}}(q;\omega_1 z^{\rho_1},\cdots,\omega_k z^{\rho_k})
=s^{-\lambda}\prod_{j=1}^{k}
\frac{(q^{\gamma_j}\omega_j
	s^{-\rho_j};q)_{\infty}}{(\omega_j
	s^{-\rho_j};q)_{\infty}},
\]
which upon letting $q\rightarrow 1^{-}$ yields the following result \cite[p. 251, Lemma 2.4]{Ozarslan-Fernandez-2022}:
\[
\mathcal{L}\left[z^{\lambda-1}E_{\bm{\rho},\lambda}^{\bm{\gamma}}(\omega_1 z^{\rho_1},\cdots,\omega_k z^{\rho_k})\right](s)
=s^{-\lambda}\prod_{j=1}^{k}
\left(1-\frac{\omega_j}{s^{\rho_j}}\right)^{-\gamma_j},
\]
where $\Re(s)>0$, $\Re(\lambda)>0$, $\Re(\rho_j)>0$ $(j=1,\cdots,k)$.

\subsection{$q$-Fractional calculus of the multivariate $q$-Mittag-Leffler functions}

Let us start with the Riemann-Liouville $q$-fractional integral of the multivariate $q$-Mittag-Leffler functions $E_{\bm{\rho},\lambda}^{\bm{\gamma}}(q;\omega_1 x^{\rho_1},\cdots,\omega_k x^{\rho_k})$, where $0<x<R$ with $R$ defined in \eqref{Condition on x}.

\begin{proposition}
	Let $0<x<R$, $\alpha,\lambda\in\mathbb{C}$ $(\Re(\lambda)>0, \Re(\alpha)>0)$, and $\bm{\gamma},\bm{\rho},\bm{\omega}\in\mathbb{C}^k$ $(\Re(\rho_j)>0, j=1,\cdots,k)$. Then, for the function defined by \eqref{Def-MultivariateQML-1}, we have
	\[
	I_q^{\alpha}x^{\lambda-1}E_{\bm{\rho},\lambda}^{\bm{\gamma}}(q;\omega_1 x^{\rho_1},\cdots,\omega_k x^{\rho_k})
	=x^{\lambda+\alpha-1}E_{\bm{\rho},\lambda+\alpha}^{\bm{\gamma}}(q;\omega_1 x^{\rho_1},\cdots,\omega_k x^{\rho_k}),
	\]
	where $I_q^{\alpha}$ is the Riemann-Liouville $q$-fractional integral operator defined by \eqref{Def-qRLFIO}. 
\end{proposition}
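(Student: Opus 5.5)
We apply $I_q^{\alpha}$ term by term to the series \eqref{Def-MultivariateQML-1} with $z_j=\omega_j x^{\rho_j}$, so that
\[
x^{\lambda-1}E_{\bm{\rho},\lambda}^{\bm{\gamma}}(q;\omega_1 x^{\rho_1},\cdots,\omega_k x^{\rho_k})
=\sum_{\mathbf{n}=\mathbf{0}}^{\infty}\prod_{j=1}^{k}\frac{(q^{\gamma_j};q)_{n_j}}{(q;q)_{n_j}}\,
\frac{\omega_1^{n_1}\cdots\omega_k^{n_k}}{\Gamma_q(\lambda+\langle\bm{\rho},\mathbf{n}\rangle)}\,x^{\lambda+\langle\bm{\rho},\mathbf{n}\rangle-1}.
\]
The key ingredient is the $q$-power rule for the Riemann--Liouville $q$-fractional integral \eqref{Def-qRLFIO}:
\[
I_q^{\alpha}x^{\mu-1}=\frac{\Gamma_q(\mu)}{\Gamma_q(\mu+\alpha)}x^{\mu+\alpha-1}\qquad(\Re(\mu)>0,\ \Re(\alpha)>0),
\]
which is standard (see \cite[Chapter 4]{Annaby-Mansour-Book-2012}). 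If a self-contained argument is wanted, substitute $t=xq^m$ in \eqref{Def-qRLFIO}. This turns the integral into $x^{\mu+\alpha-1}(1-q)\sum_m q^{m\mu}(q^{m+1};q)_{\alpha-1}/\Gamma_q(\alpha)$. That sum is evaluated by the $q$-binomial theorem \eqref{Th-qBinomial} after writing $(q^{m+1};q)_{\alpha-1}=(q;q)_\infty(q^\alpha;q)_m/\big((q^\alpha;q)_\infty(q;q)_m\big)$, giving $\frac{(q^{\alpha+\mu};q)_\infty(q;q)_\infty}{(q^\mu;q)_\infty(q^\alpha;q)_\infty}$. This recombines, via \eqref{Def-QGamma}, into the stated ratio of $q$-gamma functions.

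Applying the power rule with $\mu=\lambda+\langle\bm{\rho},\mathbf{n}\rangle$ (note $\Re(\mu)>0$), the factor $\Gamma_q(\lambda+\langle\bm{\rho},\mathbf{n}\rangle)$ cancels. It is replaced by $\Gamma_q(\lambda+\alpha+\langle\bm{\rho},\mathbf{n}\rangle)$ together with the factor $x^{\alpha}$, and resumming gives exactly $x^{\lambda+\alpha-1}E_{\bm{\rho},\lambda+\alpha}^{\bm{\gamma}}(q;\omega_1x^{\rho_1},\cdots,\omega_kx^{\rho_k})$.

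The main obstacle is justifying the interchange of $I_q^{\alpha}$, itself an infinite sum over the lattice points $xq^m$, with the $k$-fold series. Here we use Fubini/Tonelli for absolutely convergent sums. By the bound \eqref{Th-Convergence-Proof-2} and the bound $M_1$ from the proof of Theorem \ref{Th-Convergence}, the terms at the point $t=xq^m$ are dominated by
\[
M_1M_2\,|t^{\lambda-1}|\prod_j\big(|\omega_j|(1-q)^{\Re(\rho_j)}x^{\Re(\rho_j)}\big)^{n_j},
\]
using $t\le x$. This product is summable in $\mathbf{n}$ because $0<x<R$. It remains to check that $\sum_m xq^m\,|(q^{m+1};q)_{\alpha-1}|\,(xq^m)^{\Re(\lambda)-1}$ converges. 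This holds since $|(q^{m+1};q)_{\alpha-1}|$ is bounded uniformly in $m$, by the same type of estimate $(|a|;q)_\infty\le|(aq^b;q)_\infty|\le(-|a|;q)_\infty$ used in Corollary \ref{Cor-Convergence}, and $\sum_m q^{m\Re(\lambda)}<\infty$ as $\Re(\lambda)>0$. Hence the double sum converges absolutely, the interchange is legitimate, and the proposition follows.
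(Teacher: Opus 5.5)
Your proof is correct and follows the paper's route: the paper only remarks that the result follows from applying the power rule $I_q^\alpha x^{\lambda-1}=\frac{\Gamma_q(\lambda)}{\Gamma_q(\lambda+\alpha)}x^{\lambda+\alpha-1}$ term by term, and it omits the details. Your derivation of the power rule via the $q$-binomial theorem and your justification of the interchange correctly supply those details; the interchange argument combines \eqref{Th-Convergence-Proof-2}, the constant $M_1$, the uniform bound $|(q^{m+1};q)_{\alpha-1}|\le 1/(q^{\Re(\alpha)};q)_\infty$ and the condition $x<R$.
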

\begin{proof}
	Since this result is a direct consequence of
	\[
	I_q^\alpha x^{\lambda-1}=\frac{\Gamma_q(\lambda)}{\Gamma_q(\lambda+\alpha)}x^{\lambda+\alpha-1},
	\]
	we choose to omit the details of the proof. 
\end{proof}

Next, let us consider the Riemann-Liouville $q$-fractional derivative of $E_{\bm{\rho},\lambda}^{\bm{\gamma}}(q;\omega_1 x^{\rho_1},\cdots,\omega_k x^{\rho_k})$. As usual, the Riemann-Liouville $q$-fractional derivative operator $D_q^{\alpha}$ $(\alpha>0)$ is defined by \cite[p. 124, Definition 4.3.3]{Annaby-Mansour-Book-2012}:
\begin{equation}\label{Def-qRLFDO}
	D_q^\alpha f(x):=D_q^k I_q^{k-\alpha}f(x),
\end{equation}
where $k=[\alpha]+1$ (here, $[\cdot]$ stands for the Gauss floor function). We have then 
\begin{equation}\label{q-RLFDofPowerF}
	D_q^\alpha x^{\lambda-1}=\frac{\Gamma_q(\lambda)}{\Gamma_q(\lambda-\alpha)}x^{\lambda-\alpha-1}.
\end{equation}
\begin{proposition}
	Let $0<x<R$, $\lambda>0$, $\alpha>0$, and  $\bm{\gamma},\bm{\rho},\bm{\omega}\in\mathbb{C}^k$ $(\Re(\rho_j)>0, j=1,\cdots,k)$. We have
	\[
	D_q^{\alpha}x^{\lambda-1}E_{\bm{\rho},\lambda}^{\bm{\gamma}}(q;\omega_1 x^{\rho_1},\cdots,\omega_k x^{\rho_k})
	=x^{\lambda-\alpha-1}E_{\bm{\rho},\lambda-\alpha}^{\bm{\gamma}}(q;\omega_1 x^{\rho_1},\cdots,\omega_k x^{\rho_k}),
	\]
	where $D_q^{\alpha}$ is defined by \eqref{Def-qRLFDO}. 
\end{proposition}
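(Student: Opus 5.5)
The plan is to apply $D_q^{\alpha}=D_q^{m}I_q^{m-\alpha}$ with $m=[\alpha]+1$ term by term to the series \eqref{Def-MultivariateQML-1}, using the power rule \eqref{q-RLFDofPowerF}. First expand
\[
x^{\lambda-1}E_{\bm{\rho},\lambda}^{\bm{\gamma}}(q;\omega_1x^{\rho_1},\cdots,\omega_kx^{\rho_k})
=\sum_{\mathbf{n}=\mathbf{0}}^{\infty}\prod_{j=1}^{k}\frac{(q^{\gamma_j};q)_{n_j}}{(q;q)_{n_j}}\,\omega_1^{n_1}\cdots\omega_k^{n_k}\,
\frac{x^{\lambda+\langle\bm{\rho},\mathbf{n}\rangle-1}}{\Gamma_q(\lambda+\langle\bm{\rho},\mathbf{n}\rangle)}.
\]

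\textbf{Step 1.} Apply $I_q^{m-\alpha}$. By the preceding proposition (with $m-\alpha>0$ in place of $\alpha$), we get
\[
x^{\lambda+m-\alpha-1}E_{\bm{\rho},\lambda+m-\alpha}^{\bm{\gamma}}(q;\omega_1x^{\rho_1},\cdots,\omega_kx^{\rho_k}).
\]
Equivalently, the term with multi-index $\mathbf{n}$ becomes $x^{\lambda+m-\alpha+\langle\bm{\rho},\mathbf{n}\rangle-1}/\Gamma_q(\lambda+m-\alpha+\langle\bm{\rho},\mathbf{n}\rangle)$ times the same coefficients.

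\textbf{Step 2.} Apply $D_q$ $m$ times, term by term. For a single power one has $D_qx^{\mu-1}=\frac{1-q^{\mu-1}}{1-q}x^{\mu-2}=\frac{\Gamma_q(\mu)}{\Gamma_q(\mu-1)}x^{\mu-2}$. Iterating $m$ times turns $1/\Gamma_q(\lambda+m-\alpha+\langle\bm{\rho},\mathbf{n}\rangle)$ into $1/\Gamma_q(\lambda-\alpha+\langle\bm{\rho},\mathbf{n}\rangle)$ and the power into $x^{\lambda-\alpha+\langle\bm{\rho},\mathbf{n}\rangle-1}$. Here we interpret $1/\Gamma_q$ at poles as $0$; this covers the term $\mathbf{n}=\mathbf{0}$ when $\lambda-\alpha\in\mathbb{Z}_{\le0}$, where $D_q$ annihilates a constant. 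Summing the resulting series gives the claimed $x^{\lambda-\alpha-1}E_{\bm{\rho},\lambda-\alpha}^{\bm{\gamma}}$.

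\textbf{Step 3 (main obstacle).} We must justify interchanging $D_q$ with the infinite sum. Since $D_q$ is a finite difference quotient at the two points $x$ and $qx$, it commutes with any series that converges pointwise at both points. This holds because $0<qx<x<R$, and by Corollary \ref{Cor-Convergence} (or Theorem \ref{Th-Convergence}) the series converges absolutely there. Note that Theorem \ref{Th-Convergence} allows any complex $\lambda$, so the series with parameter $\lambda-\alpha$, possibly with nonpositive real part, still converges; the bound \eqref{Th-Convergence-Proof-2} holds for all $\lambda\in\mathbb{C}$.

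For the $q$-integral in Step 1 the interchange was already used in the preceding proposition. It can be rechecked via Fubini/Tonelli for the absolutely convergent double sum: the $q$-integral is a sum over $xq^{i}$, the kernel $(q^{i+1};q)_{m-\alpha-1}$ is bounded, and the term bounds from \eqref{Th-Convergence-Proof-2} are summable uniformly on $(0,x]$. This completes the argument.
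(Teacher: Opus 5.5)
Your proposal is correct and is essentially the paper's argument: the paper simply applies the power rule \eqref{q-RLFDofPowerF} term by term to the series \eqref{Def-MultivariateQML-1}, and your Steps 1--2 carry out that same computation with $D_q^{\alpha}=D_q^{m}I_q^{m-\alpha}$ unpacked, including the convention $1/\Gamma_q=0$ whenever $\lambda-\alpha+\langle\bm{\rho},\mathbf{n}\rangle$ is a pole. Your Step 3 supplies what the paper leaves implicit, namely the justification for the termwise interchange: pointwise absolute convergence on $(0,R)$ from Theorem \ref{Th-Convergence}, which holds for every complex second parameter.
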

\begin{proof}
	The result follows immediately by using the formula \eqref{q-RLFDofPowerF}.
\end{proof}

\subsection{A single contour integral representation}

In this subsection, we would like to conclude the whole of Section \ref{Section-2} by discussing a simple contour integral. 
\begin{theorem}\label{Th-ContourIR}
	Let $\mathbf{z}\in\mathbb{D}$, $\lambda\in\mathbb{C}$, and let $\bm{\gamma}, \bm{\rho}\in\mathbb{C}^{k}$ $(\Re(\rho_j)>0, j=1,\cdots,k)$. Then, for the function defined by \eqref{Def-MultivariateQML-1}, we have
	\begin{equation}\label{Th-ContourIR-1}
		E_{\bm{\rho},\lambda}^{\bm{\gamma}}(q;\mathbf{z})
		=\frac{(1-q)^{\lambda-1}}{2\pi\mathrm{i}}
		\int_{\mathcal{H}}\frac{t^{-\lambda}}{(t;q)_{\infty}}\prod_{j=1}^{k}\frac{(q^{\gamma_j}z_j
			(1-q)^{\rho_j}t^{-\rho_j};q)_{\infty}}{(z_j
			(1-q)^{\rho_j}t^{-\rho_j};q)_{\infty}}\mathrm{d}t,
	\end{equation}
	where $\mathcal{H}$ is a Hankel contour encircled the disk: 
	\[
	|t|\leq \max_{1\leq j\leq k} |z_j|^{1/\Re(\rho_j)}\mathrm{e}^{|\Im(\rho_j)|\pi/\Re(\rho_j)}.
	\]
\end{theorem}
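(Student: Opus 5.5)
The plan is to start from a Hankel-type contour integral representation of the reciprocal $q$-gamma function and then sum term by term using the $q$-binomial theorem \eqref{Th-qBinomial}, mirroring the computation in the proof of Proposition \ref{Prop-LaplaceT}. The needed representation is the $q$-analogue of Hankel's formula:
\[
\frac{1}{\Gamma_q(\mu)}=\frac{(1-q)^{\mu-1}}{2\pi\mathrm{i}}\int_{\mathcal{H}}\frac{t^{-\mu}}{(t;q)_{\infty}}\,\mathrm{d}t ,
\]
with $\mathcal{H}$ starting and ending at $-\infty$ and encircling the origin once positively, while staying clear of the poles $t=q^{-m}$ on the positive axis. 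I would establish this first, since the paper has not stated it.

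To prove the Hankel formula, I expand $1/(t;q)_\infty$ in the region where it is analytic near the contour. Concretely, I would evaluate the integral as a residue-type sum. For $|t|$ small use Euler's expansion $1/(t;q)_\infty=\sum_m t^m/(q;q)_m$. Alternatively, and more cleanly, I would close the contour around the poles $t=q^{-m}$, $m\ge0$. Their residues are $-q^{-m}\,q^{m(m+1)/2}(-1)^m/\bigl((q;q)_m(q;q)_\infty\bigr)$. Multiplying by $t^{-\mu}=q^{m\mu}$ gives a sum which, by Euler's second identity, collapses to $(q^{\mu};q)_\infty/(q;q)_\infty$. This is $(1-q)^{1-\mu}/\Gamma_q(\mu)$ by \eqref{Def-QGamma}.

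Justifying this deformation requires controlling $t^{-\mu}/(t;q)_\infty$ on large circles avoiding the poles. I would use the Jacobi triple product or the growth bound $|(t;q)_\infty|\ge C\exp\bigl(c\log^2|t|\bigr)$ on circles $|t|=q^{-m-1/2}$, which beats any power $|t|^{-\Re\mu}$. I expect this to be the \emph{main technical obstacle}, together with the branch of $t^{-\mu}$ being cut along the negative axis where $\mathcal{H}$ runs.

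With the formula in hand, I apply it with $\mu=\lambda+\langle\bm\rho,\mathbf n\rangle$. Using $t^{-\mu}=t^{-\lambda}\prod_j t^{-\rho_j n_j}$, each term of \eqref{Def-MultivariateQML-1} becomes
\[
\frac{(1-q)^{\lambda-1}}{2\pi\mathrm{i}}\int_{\mathcal H}\frac{t^{-\lambda}}{(t;q)_\infty}\prod_{j=1}^k\frac{(q^{\gamma_j};q)_{n_j}}{(q;q)_{n_j}}\bigl(z_j(1-q)^{\rho_j}t^{-\rho_j}\bigr)^{n_j}\,\mathrm{d}t .
\]
Interchanging the $\mathbf n$-sum and the integral, each single series is then summed by \eqref{Th-qBinomial}, which yields \eqref{Th-ContourIR-1}.

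The interchange needs uniform convergence on $\mathcal H$. For this I need $|z_j(1-q)^{\rho_j}t^{-\rho_j}|\le\theta<1$ for all $t\in\mathcal H$. Writing $t=|t|e^{\mathrm{i}\phi}$ with $|\phi|\le\pi$ gives
\[
|t^{-\rho_j}|=|t|^{-\Re\rho_j}\,e^{\phi\Im\rho_j}\le |t|^{-\Re\rho_j}e^{\pi|\Im\rho_j|}.
\]
So it suffices that $|t|$ exceeds $|z_j|^{1/\Re\rho_j}e^{\pi|\Im\rho_j|/\Re\rho_j}$ times $(1-q)$. This is guaranteed by the contour encircling the stated disk; the factor $(1-q)<1$ only helps.

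The remaining points are the convergence of $\int_{\mathcal H}|t^{-\lambda}/(t;q)_\infty|\,|\mathrm{d}t|$ along the two rays to $-\infty$ and the compatibility between the two constraints on $\mathcal H$. The integral converges since $|(t;q)_\infty|$ grows super-polynomially for $t\to-\infty$. For the compatibility, the circular part may need to be large, so it passes the poles $q^{-m}$; I would handle this by letting $\mathcal H$ be a loop encircling both the disk and all poles, consistent with the Hankel formula derived via the residue sum above. Dominated convergence on the integral then finishes the proof.
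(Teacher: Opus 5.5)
Your overall route matches the paper's. The paper quotes the $q$-Hankel formula $1/\Gamma_q(x)=\frac{(1-q)^{x-1}}{2\pi\mathrm{i}}\int_{\mathcal H}t^{-x}(t;q)_\infty^{-1}\,\mathrm{d}t$ from Annaby--Mansour, inserts it into the series, and sums with the $q$-binomial theorem. You additionally prove the Hankel formula by residues, and that computation is correct: the residue of $t^{-\mu}/(t;q)_\infty$ at $t=q^{-m}$ is $(-1)^{m+1}q^{m(m-1)/2+m\mu}/\bigl((q;q)_m(q;q)_\infty\bigr)$, and the integrals over $|t|=q^{-M-1/2}$ and over the rays beyond it tend to $0$.

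But look at what it proves. The formula holds for a Hankel contour that encloses \emph{none} of the poles, i.e. one meeting the positive real axis inside $(0,1)$. If $\mathcal H$ crosses the positive axis between $q^{-M}$ and $q^{-M-1}$, the integral shifts by the residues at $1,q^{-1},\dots,q^{-M}$. For instance, enclosing only $t=1$ gives $\frac{1}{\Gamma_q(\mu)}-\frac{(1-q)^{\mu-1}}{(q;q)_\infty}$, and letting the contour swallow more and more poles drives the integral to $0$. So your last step fails. Taking $\mathcal H$ to encircle the disk ``and all poles'', and calling this consistent with the residue derivation, contradicts that very derivation. The resulting integral is not $1/\Gamma_q(\lambda+\langle\bm\rho,\mathbf n\rangle)$.

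The repair is already in your hands. Since $|t^{-\rho_j}|=|t|^{-\Re(\rho_j)}\mathrm{e}^{\Im(\rho_j)\arg t}$, you showed the interchange only needs $|z_j(1-q)^{\rho_j}t^{-\rho_j}|\le\theta<1$ on $\mathcal H$. Equivalently, $|t|>(1-q)|z_j|^{1/\Re(\rho_j)}\mathrm{e}^{\Im(\rho_j)\arg t/\Re(\rho_j)}$. On the positive axis ($\arg t=0$) this threshold is $(1-q)|z_j|^{1/\Re(\rho_j)}$, which is $<1$ exactly because $\mathbf z\in\mathbb D$. So choose $\mathcal H$ crossing the positive axis at a point of $\bigl(\max_j(1-q)|z_j|^{1/\Re(\rho_j)},1\bigr)$ and lying outside these angle-dependent thresholds elsewhere. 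For real $\bm\rho$, a circle of radius in that interval will do. Such a contour encloses no $q^{-m}$, so the Hankel formula applies to every term and dominated convergence finishes the proof.

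Do not try to honor the disk in the statement literally. It comes from the paper replacing the condition by the cruder $|z_j||t^{-\rho_j}|<1$, i.e. dropping the factor $(1-q)$, and its radius can exceed $1$ for $\mathbf z\in\mathbb D$. For example, take $k=1$, $\rho_1=1$, $\gamma_1=0$ and $1<|z_1|<\min\{q^{-1},(1-q)^{-1}\}$. The right-hand side then reduces to the Hankel integral itself. A contour around $|t|\le|z_1|$ crossing the axis in $(|z_1|,q^{-1})$ gives $\frac{1}{\Gamma_q(\lambda)}-\frac{(1-q)^{\lambda-1}}{(q;q)_\infty}$ instead of $E^{0}_{1,\lambda}(q;z_1)=\frac{1}{\Gamma_q(\lambda)}$. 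The paper's proof never addresses the poles at $q^{-m}$; your residue analysis is exactly what shows the sharper contour condition is needed.
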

\begin{proof}
	Recall that 
	\[
	\frac{1}{\Gamma_q(x)}=\frac{(1-q)^{x-1}}{2\pi\mathrm{i}}\int_{\mathcal{H}}\frac{t^{-x}}{(t;q)_{\infty}}\mathrm{d}t\quad(\Re(x)>0),
	\]
	where $\mathcal{H}$ is a Hankel contour; see \cite[p. 212]{Annaby-Mansour-Book-2012}. 
	
	Using the series representation \eqref{Def-MultivariateQML-1} for $E_{\bm{\rho},\lambda}^{\bm{\gamma}}(q;\mathbf{z})$ and the $q$-binomial theorem \eqref{Th-qBinomial}, we have 
	\begin{align*}
		E_{\bm{\rho},\lambda}^{\bm{\gamma}}(q;\mathbf{z})
		&=\frac{(1-q)^{\lambda-1}}{2\pi\mathrm{i}}
		\int_{\mathcal{H}}\frac{t^{-\lambda}}{(t;q)_{\infty}}
		\prod_{j=1}^{k}
		\left(\sum_{n_j=0}^{\infty}
		\frac{(q^{\gamma_j};q)_{n_j}}{(q;q)_{n_j}}
		\big(z_j
		(1-q)^{\rho_j}t^{-\rho_j}\big)^{n_j}\right)\mathrm{d}t\\
		&=\frac{(1-q)^{\lambda-1}}{2\pi\mathrm{i}}
		\int_{\mathcal{H}}\frac{t^{-\lambda}}{(t;q)_{\infty}}\prod_{j=1}^{k}\frac{(q^{\gamma_j}z_j
			(1-q)^{\rho_j}t^{-\rho_j};q)_{\infty}}{(z_j
			(1-q)^{\rho_j}t^{-\rho_j};q)_{\infty}}\mathrm{d}t,
	\end{align*}
	where $\mathbf{z}\in\mathbb{D}$ and 
	\begin{equation}\label{Th-ContourIR-Proof-1}
		|z_j|(1-q)^{\Re(\rho_j)}|t^{-\rho_j}|<1\quad (j=1,\cdots,k).
	\end{equation}
	Note that \eqref{Th-ContourIR-Proof-1} can be simplified to $|z_j||t^{-\rho_j}|<1$ $(j=1,\cdots,k)$, we hence obtain \[
	|z_j|^{1/\Re(\rho_j)}\mathrm{e}^{|\Im(\rho_j)|\pi/\Re(\rho_j)}<|t|
	\quad (j=1,\cdots,k).
	\]
\end{proof}
\begin{remark}
	When $k=1$ in \eqref{Th-ContourIR-1}, we get
	\begin{equation}\label{Th-ContourIR-Remark-1}
		E_{\rho,\lambda}^{\gamma}(q;z)
		=\frac{(1-q)^{\lambda-1}}{2\pi\mathrm{i}}
		\int_{\mathcal{H}}\frac{t^{-\lambda}}{(t;q)_{\infty}}
		\frac{(q^{\gamma}z
			(1-q)^{\rho}t^{-\rho};q)_{\infty}}{(z
			(1-q)^{\rho}t^{-\rho};q)_{\infty}}\mathrm{d}t,
	\end{equation}
	where $|z|<(1-q)^{-\Re(\rho)}$, $\lambda,\gamma,\rho\in \mathbb{C}$ $(\Re(\rho)>0)$, and $|t|\leq |z|^{1/\Re(\rho)}\mathrm{e}^{|\Im(\rho)|\pi/\Re(\rho)}$. If we further set $\gamma=1$ in \eqref{Th-ContourIR-Remark-1} and assume that $\rho$ is a positive number, then we obtain the result of Annaby and Mansour \cite[p. 212, Theorem 7.8]{Annaby-Mansour-Book-2012}. Additionally, by noting that
	\[
	\lim_{q\rightarrow 1^{-}}
	\frac{1}{(t(1-q);q)_{\infty}}
	\frac{(q^{\gamma}z
		t^{-\rho};q)_{\infty}}{(z
		t^{-\rho};q)_{\infty}}=\frac{t^{\rho\gamma}\mathrm{e}^{t}}{(t^{\rho}-z)^{\gamma}},
	\]
	we can consider \eqref{Th-ContourIR-Remark-1} as the $q$-analogue of the contour integral representation for Prabhakar function \cite[p. 21]{Giusti et al-2020}.
\end{remark}

\section{Properties of integral operator \eqref{Def-OurOperator}}\label{Section-3}

\subsection{The $L_{q}^{p}$-boundedness}

For $1\leq p<\infty$ and $a>0$, we define the space $L_q^p\equiv L_q^p[0,a]$ by
\begin{equation}\label{Def-Lqp}
	L_q^p[0,a]:= \left\{f:[0,a]\rightarrow\mathbb{C}; \|f\|_p := \left( \int_0^a |f(t)|^p \, \mathrm{d}_q t \right)^{\frac{1}{p}} < \infty \right\}.
\end{equation}
It is well-known that $L_q^p[0,a]$ is a Banach space (see \cite[p. 15]{Annaby-Mansour-Book-2012} and \cite[p. 347, Theorem 3.8]{Cardoso-Petronilho-2015}). This space has already become one of the standard spaces used in studying various $q$-fractional integral operators (see \cite{Castro-2026}, \cite{Luo-Zhou-2026}, \cite{Shaimardan-Tokmagambetov-2021}, \cite{Shaimardan-Persson-Tokmagambetov-2020} and  \cite{Shaimardan-Karimov-Ruzhansky-Mamanazarov-2025}). It should be noted that some authors use a \emph{smaller} space, namely, the space obtained by replacing the norm $\|\cdot\|_p$ in definition \eqref{Def-Lqp} with the norm (see \cite[p. 15, Definition 1.4.1]{Annaby-Mansour-Book-2012})
\[
\|f\|_{p,*}:=\sup_{0<x\leq a}\left(\int_0^x|f(t)|^p \mathrm{d}_q t\right)^{\frac{1}{p}}.
\]
In the literature, this space is also denoted by similar notation; see, for example \cite{Zhang-Wang-2021}; see also \cite{Sheng-Zhang-2022a}. Obviously, $\|f\|_p\leq\|f\|_{p,*}$. To avoid confusion, we explicitly state here that in most of the cases the proofs that hold for $\|\cdot\|_{p,*}$ also hold true (with slight modification) for the norm defined in \eqref{Def-Lqp}.

Let us first state the boundedness result in $L_q^p$ for the familiar Riemann-Liouville $q$-fractional integral operator. Following the proofs in \cite{Annaby-Mansour-Book-2012} and \cite[p. 11]{Luo-Zhou-2026}, the following lemma can be established easily. 

\begin{lemma}\label{Lemma-LuoZhou}
	Let $\lambda\in\mathbb{C}$ $(\Re(\lambda)>0)$. The operator $I_q^\lambda$ defined by \eqref{Def-qRLFIO} is bounded on $L_q^p[0,a]$ for $1\leq p<\infty$, and satisfies
	\begin{equation}\label{Lemma-LuoZhou-1}
		\|I_q^{\lambda} f\|_p\leq \mathsf{k}_q(a;\lambda) \|f\|_p,
	\end{equation}
	where
	\begin{equation}
		\mathsf{k}_q(a;\lambda):= \frac{\Gamma_q(\Re(\lambda))}{|\Gamma_q(\lambda)|} \frac{a^{\Re(\lambda)}}{\Gamma_q(\Re(\lambda)+1)}.
	\end{equation}
\end{lemma}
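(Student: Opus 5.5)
Our plan is a Schur-type (Young's inequality for $q$-convolution) argument, exploiting the fact that on the $q$-lattice the kernel of $I_q^\lambda$ is a function of the ratio $t/x$ only.

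\emph{Step 1: lattice form of the operator.} Writing $x=aq^m$ and $t=xq^n$, definition \eqref{Def-qRLFIO} becomes
\[
I_q^\lambda f(x)=\frac{(1-q)x^{\lambda}}{\Gamma_q(\lambda)}\sum_{n=0}^{\infty}q^n(q^{n+1};q)_{\lambda-1}f(xq^n).
\]
\emph{Step 2: bounding the kernel.} For $|(q^{n+1};q)_{\lambda-1}|=|(q^{n+1};q)_\infty/(q^{n+\lambda};q)_\infty|$ we use the elementary inequality $|1-q^{\lambda+j}|\ge 1-q^{\Re(\lambda)+j}$. This gives $|(q^{n+1};q)_{\lambda-1}|\le (q^{n+1};q)_{\Re(\lambda)-1}$, and hence
\[
|I_q^\lambda f(x)|\le \frac{\Gamma_q(\Re\lambda)}{|\Gamma_q(\lambda)|}\,I_q^{\Re\lambda}|f|(x).
\]
This reduces everything to real $\mu=\Re(\lambda)>0$ and nonnegative $f$, and it is exactly where the prefactor $\Gamma_q(\Re\lambda)/|\Gamma_q(\lambda)|$ in $\mathsf{k}_q(a;\lambda)$ comes from.

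\emph{Step 3: the real case.} For $x\le a$ bound $x^{\mu}\le a^{\mu}$ and apply Minkowski's inequality in $L_q^p[0,a]$ to the sum over $n$. Since the $q$-integral over $[0,a]$ is the sum over lattice points $aq^m$ weighted by $(1-q)aq^m$, the substitution $x\mapsto xq^n$ gives
\[
\Big(\int_0^a |f(xq^n)|^p\,\mathrm{d}_qx\Big)^{1/p}=q^{-n/p}\Big(\int_0^{aq^n}|f|^p\,\mathrm{d}_qt\Big)^{1/p}\le q^{-n/p}\|f\|_p .
\]
This carries a factor $q^{-n/p}$, which would spoil summability when $p=1$ and $\mu$ is small. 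So a cruder approach is needed: keep $x^{\mu}$ inside and use the dilation $x^{\mu}=q^{-n\mu}(xq^n)^{\mu}$, or better, do a direct Schur test with weights.

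Concretely, write $I_q^\mu f(x)=\int_0^a K(x,t)f(t)\,\mathrm{d}_qt$ with $K(x,t)=\frac{x^{\mu-1}}{\Gamma_q(\mu)}(qt/x;q)_{\mu-1}\mathbf{1}_{t<x}\ge0$. Then verify the two estimates
\[
\sup_x\int_0^a K(x,t)\,\mathrm{d}_qt\le \frac{a^\mu}{\Gamma_q(\mu+1)}, \qquad \sup_t\int_0^a K(x,t)\,\mathrm{d}_qx\le \frac{a^\mu}{\Gamma_q(\mu+1)}.
\]
The first is just $I_q^\mu 1=x^\mu/\Gamma_q(\mu+1)$. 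The second is the $q$-integral over $x\in(t,a]$ of the kernel, which should be computed via the $q$-analogue of $\int_t^a (x-qt)^{\mu-1}\,\mathrm{d}_qx$ and bounded by $a^\mu/\Gamma_q(\mu+1)$. The Schur test (Hölder with the split $K=K^{1/p}K^{1/p'}$) then yields $\|I_q^\mu f\|_p\le \frac{a^\mu}{\Gamma_q(\mu+1)}\|f\|_p$ for all $1\le p<\infty$.

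\emph{Main obstacle.} The second (column) bound is the delicate step. The $q$-integral in $x$ runs over the lattice $aq^m$, while $t$ also lies on this lattice, and $(qt/x;q)_{\mu-1}$ for $\mu<1$ is not monotone in the obvious way. We would first try to evaluate it exactly by the $q$-binomial theorem \eqref{Th-qBinomial}, or by the known identity $D_{q,x}\big[x^{\mu}(t/x;q)_\mu\big]=[\mu]_q\,x^{\mu-1}(qt/x;q)_{\mu-1}$. If that fails, we would fall back on following the argument in \cite[p. 11]{Luo-Zhou-2026} via summability of the discrete convolution sequence $q^n(q^{n+1};q)_{\mu-1}$.
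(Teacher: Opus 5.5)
The paper gives no proof of its own and only defers to Annaby--Mansour and Luo--Zhou. Your two-step route is consistent with the constant it states, and Step 2 is right as written: $|1-q^{\lambda+j}|\ge 1-q^{\Re(\lambda)+j}$ gives
\[
|I_q^\lambda f|\le \frac{\Gamma_q(\Re(\lambda))}{|\Gamma_q(\lambda)|}\,I_q^{\Re(\lambda)}|f|,
\]
which is exactly where that factor of $\mathsf{k}_q(a;\lambda)$ comes from. Reducing the real case to a Schur test is also legitimate. Everything lives on the nodes $aq^m$ with weights $(1-q)aq^m$, and the two $q$-sums can be swapped because the terms are nonnegative. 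You were right to drop the Minkowski/dilation attempt, since it costs $q^{-n/p}$ and diverges for $p=1$.

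\textbf{What is missing: the column bound.} You leave it open, and the identity you propose for it is misstated. With $(t/x;q)_\mu$ on the left, $q$-differentiation in $x$ gives $(t/x;q)_{\mu-1}$, not your kernel. The correct form is
\[
D_{q,x}\big[x^{\mu}(qt/x;q)_{\mu}\big]=\frac{1-q^{\mu}}{1-q}\,x^{\mu-1}(qt/x;q)_{\mu-1}.
\]
With it the step closes immediately. For $t=aq^k$, the $q$-integral in $x$ runs over $x=aq^m$ with $0\le m\le k$, and the sum telescopes:
\[
\int_0^a K(x,aq^k)\,\mathrm{d}_qx=\frac{a^{\mu}(q^{k+1};q)_{\mu}-(aq^{k+1})^{\mu}(1;q)_{\mu}}{\Gamma_q(\mu+1)}=\frac{a^{\mu}(q^{k+1};q)_{\mu}}{\Gamma_q(\mu+1)}\le\frac{a^{\mu}}{\Gamma_q(\mu+1)}.
\]
This uses $(1;q)_\mu=0$, and $0\le(z;q)_\mu=\prod_{j\ge0}(1-zq^j)/(1-zq^{j+\mu})\le1$ for $0\le z<1$ and $\mu>0$.

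\textbf{The cutoff must be $\mathbf{1}_{t\le x}$, not $\mathbf{1}_{t<x}$.} The node $t=x$ (the term $n=0$) belongs to $\int_0^x$. It is needed for the row bound $I_q^\mu 1=x^\mu/\Gamma_q(\mu+1)$. Also, for integer $\mu$ the kernel $(qt/x;q)_{\mu-1}$ does not vanish on its own at nodes $t>x$.

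With these two corrections, the Schur test gives $\|I_q^{\mu}f\|_p\le \frac{a^{\mu}}{\Gamma_q(\mu+1)}\|f\|_p$ for all $1\le p<\infty$. Combined with Step 2, this yields exactly $\mathsf{k}_q(a;\lambda)$.
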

\begin{remark}
	In addition, for $\lambda>0$, Shaimardan \emph{et al.} \cite[p. 4433]{Shaimardan-Persson-Tokmagambetov-2020} give a proof in $L_q^p[0,a]$ based on a different approach. 
\end{remark}

We now turn to the boundedness of the integral operator $E_{q;\bm{\rho},\lambda;(\bm{\omega})}^{\bm{\gamma}}$ defined by \eqref{Def-OurOperator}
on the space $L_q^p$. 

\begin{theorem}\label{Th-Boundedness}
	Let $\lambda\in\mathbb{C}$ $(\Re(\lambda)>0)$, $\bm{\gamma}, \bm{\rho}\in\mathbb{C}^k$ $(\Re(\rho_j)>0, j=1,\cdots,k)$. Also, let $\bm{\omega}\in\mathbb{C}^k$ satisfy 
	\begin{equation}\label{Th-Boundedness-1}
		a(1-q)|\omega_j|^{1/\Re(\rho_j)}<1\quad (j=1,\cdots,k).
	\end{equation}
	Then, the operator $E_{q;\bm{\rho},\lambda;(\bm{\omega})}^{\bm{\gamma}}$ defined by \eqref{Def-OurOperator} is bounded on  $L_q^p[0,a]$, that is
	\begin{equation}\label{Th-Boundedness-2}
		\|E_{q;\bm{\rho},\lambda;(\bm{\omega})}^{\bm{\gamma}} f\|_p \leq \mathfrak{C}\|f\|_p, \quad \forall f \in L_q^p[0,a],
	\end{equation}
	where
	\begin{equation}
		\mathfrak{C}:=a^{\Re(\lambda)}\frac{(1-q)^{\Re(\lambda)}}{1-q^{\Re(\lambda)}}
		\frac{(-q^{\Re(\lambda)};q)_{\infty}}{(q;q)_{\infty}}
		\prod_{j=1}^{k}\frac{(-q^{\Re(\gamma_j)}|\omega_j|a^{\Re(\rho_j)}(1-q)^{\Re(\rho_j)};q)_{\infty}}{(|\omega_j|a^{\Re(\rho_j)}(1-q)^{\Re(\rho_j)};q)_{\infty}}.
	\end{equation}
\end{theorem}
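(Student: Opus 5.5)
The plan is to expand the kernel termwise and reduce everything to Lemma \ref{Lemma-LuoZhou} applied to Riemann--Liouville $q$-fractional integrals of shifted orders. With $c=q^\lambda t$ and $z=x$ in \eqref{Def-MultivariateQML-2}, I first combine the $q$-Pochhammer factors using $(qt/x;q)_{\lambda-1}\,(q^\lambda t/x;q)_{\langle\bm\rho,\mathbf n\rangle}=(qt/x;q)_{\lambda+\langle\bm\rho,\mathbf n\rangle-1}$. This follows directly from $(a;q)_\alpha=(a;q)_\infty/(aq^\alpha;q)_\infty$. The operator then becomes, formally,
\[
\big(E_{q;\bm{\rho},\lambda;(\bm{\omega})}^{\bm{\gamma}}f\big)(x)=\sum_{\mathbf n=\mathbf 0}^\infty \prod_{j=1}^k\frac{(q^{\gamma_j};q)_{n_j}}{(q;q)_{n_j}}\omega_j^{n_j}\; I_q^{\lambda+\langle\bm\rho,\mathbf n\rangle}f(x).
\]
This is the $q$-analogue of the classical expansion of the Prabhakar-type operator as a series of Riemann--Liouville integrals. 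The interchange of the sum and the $q$-integral will be justified a posteriori: the bound below shows that the series of absolute values converges, so the $q$-integral, which is itself a discrete sum, can be interchanged with the summation over $\mathbf n$.

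Next I take $L_q^p$ norms and apply Minkowski's inequality, in its countable form, together with Lemma \ref{Lemma-LuoZhou}. This gives
\[
\|E f\|_p\le \sum_{\mathbf n}\prod_j\Big|\tfrac{(q^{\gamma_j};q)_{n_j}}{(q;q)_{n_j}}\Big||\omega_j|^{n_j}\,\mathsf{k}_q\big(a;\lambda+\langle\bm\rho,\mathbf n\rangle\big)\|f\|_p .
\]
I then bound $\mathsf{k}_q$ in three pieces. Write $\mu=\Re(\lambda)+\Re\langle\bm\rho,\mathbf n\rangle$.

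\begin{itemize}
\item First, $\Gamma_q(\mu)/\Gamma_q(\mu+1)=(1-q)/(1-q^{\mu})\le (1-q)/(1-q^{\Re\lambda})$.
\item Second, $1/|\Gamma_q(\lambda+\langle\bm\rho,\mathbf n\rangle)|\le \frac{(-q^{\Re\lambda};q)_\infty}{(q;q)_\infty}(1-q)^{\mu-1}$ by \eqref{Th-Convergence-Proof-2}.
\item Third, $a^{\mu}=a^{\Re\lambda}\prod_j a^{\Re(\rho_j)n_j}$.
\end{itemize}

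Multiplying these, the $(1-q)$ powers combine to $(1-q)^{\Re\lambda}\prod_j(1-q)^{\Re(\rho_j)n_j}$. This reproduces exactly the prefactor of $\mathfrak C$ times the factor $\prod_j\big(|\omega_j|a^{\Re\rho_j}(1-q)^{\Re\rho_j}\big)^{n_j}$.

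The remaining step, and the only real obstacle, is summing $\sum_{n}\big|(q^{\gamma};q)_n/(q;q)_n\big|\,y^n$ with $y=|\omega_j|a^{\Re\rho_j}(1-q)^{\Re\rho_j}<1$, where the bound $y<1$ is precisely hypothesis \eqref{Th-Boundedness-1}. I will use $|(q^\gamma;q)_n|\le\prod_{i<n}(1+q^{\Re\gamma+i})=(-q^{\Re\gamma};q)_n$. Then the $q$-binomial theorem \eqref{Th-qBinomial} with $a=-q^{\Re\gamma_j}$ sums the series to $(-q^{\Re\gamma_j}y;q)_\infty/(y;q)_\infty$, which gives the product in $\mathfrak C$. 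The positivity of these majorant series is what validates both the interchange and the use of Minkowski. So the convergence check is contained in this same computation, and the proof closes with \eqref{Th-Boundedness-2}.
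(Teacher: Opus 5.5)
Your proposal is correct and matches the paper's proof essentially step for step: the paper likewise rewrites the operator as the series \eqref{Th-Boundedness-Proof-1} of Riemann--Liouville $q$-integrals, applies Lemma \ref{Lemma-LuoZhou} termwise, bounds $\mathsf{k}_q(a;\lambda+\langle\bm{\rho},\mathbf{n}\rangle)$ using the factor $(1-q)/(1-q^{\Re(\lambda)})$ together with \eqref{Th-Convergence-Proof-2}, majorizes $|(q^{\gamma_j};q)_{n_j}|$ by $(-q^{\Re(\gamma_j)};q)_{n_j}$, and sums with the $q$-binomial theorem. Your check of the Pochhammer identity behind \eqref{Th-Boundedness-Proof-1}, and your remarks on Minkowski's inequality and on interchanging the summations, make explicit what the paper leaves implicit.
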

\begin{proof}
	First, we point out that the operator \eqref{Def-OurOperator} can also be expressed in terms of the Riemann-Liouville $q$-fractional integral operators as follows: 
	\begin{equation}\label{Th-Boundedness-Proof-1}
		(E_{q;\bm{\rho},\lambda;(\bm{\omega})}^{\bm{\gamma}} f)(x) 
		=\sum_{\mathbf{n}=\mathbf{0}}^{\infty} \frac{(q^{\gamma_1};q)_{n_1}}{(q;q)_{n_1}} 
		\cdots \frac{(q^{\gamma_k};q)_{n_k}}{(q;q)_{n_k}}
		\omega_1^{n_1}\cdots\omega_k^{n_k}
		\cdot 
		I_q^{\lambda + \langle \bm{\rho}, \mathbf{n}\rangle} f(x).
	\end{equation}
	Applying Lemma \ref{Lemma-LuoZhou}, we obtain
	\begin{equation}
		\|E_{q;\bm{\rho},\lambda;(\bm{\omega})}^{\bm{\gamma}} f\|_p
		\leq C\|f\|_p,
	\end{equation}
	where
	\begin{equation}\label{Th-Boundedness-Proof-2}
		C:=\sum_{\mathbf{n}=\mathbf{0}}^{\infty} 
		\frac{|(q^{\gamma_1};q)_{n_1}|}{(q;q)_{n_1}} 
		\cdots \frac{|(q^{\gamma_k};q)_{n_k}|}{(q;q)_{n_k}}
		|\omega_1|^{n_1}\cdots|\omega_k|^{n_k}
		\mathsf{k}_q(a;\lambda+\langle\bm{\rho},\mathbf{n}\rangle).
	\end{equation}
	
	Since $|(q^{\gamma_j};q)_{n_j}|\leq(-q^{\Re(\gamma_j)};q)_{n_j}$ $(j=1,\cdots,k)$ and
	\[
	\mathsf{k}_q(a;\lambda+\langle\bm{\rho},\mathbf{n}\rangle) 
	\leq \frac{1-q}{1-q^{\Re(\lambda)}}\frac{a^{\Re(\lambda) + \Re(\langle \bm{\rho},\mathbf{n}\rangle)}}{|\Gamma_q(\lambda + \langle\bm{\rho}, \mathbf{n}\rangle)|}, 
	\]
	we can see that
	\[
	C\leq \frac{1-q}{1-q^{\Re(\lambda)}}
	a^{\Re(\lambda)}
	\sum_{\mathbf{n}=\mathbf{0}}^{\infty} 
	\frac{(-q^{\Re(\gamma_1)};q)_{n_1}}{(q;q)_{n_1}} 
	\cdots \frac{(-q^{\Re(\gamma_k)};q)_{n_k}}{(q;q)_{n_k}}
	\frac{|\omega_1|^{n_1}\cdots|\omega_k|^{n_k}a^{\Re(\langle \bm{\rho},\mathbf{n}\rangle)}}{|\Gamma_q(\lambda + \langle\bm{\rho}, \mathbf{n}\rangle)|}.
	\]
	The use of \eqref{Th-Convergence-Proof-2} then gives
	\begin{align*}
		C&\leq \frac{1-q}{1-q^{\Re(\lambda)}}
		a^{\Re(\lambda)}\frac{(-q^{\Re(\lambda)};q)_{\infty}}{(q;q)_{\infty}}(1-q)^{\Re(\lambda)-1}\\
		&\hspace{1cm}\cdot 
		\prod_{j=1}^{k}\left(\sum_{n_j=0}^{\infty} 
		\frac{(-q^{\Re(\gamma_j)};q)_{n_j}}{(q;q)_{n_j}} 
		\big(|\omega_j|a^{\Re(\rho_j)}(1-q)^{\Re(\rho_j)}\big)^{n_j}\right).
	\end{align*}
	The desired result now follows by making use of the $q$-binomial theorem \eqref{Th-qBinomial} under the condition \eqref{Th-Boundedness-1}. In addition, the condition \eqref{Th-Boundedness-1} is equivalent to 
	\[
	a<R=\frac{1}{1-q}\min_{1\leq j\leq k}\left\{|\omega_j|^{-1/\Re(\rho_j)}\right\},
	\]
	which guarantees that the integral operator \eqref{Def-OurOperator} 
	is well defined on $[0,a]$. 
\end{proof}	
\begin{remark}
	~
	\begin{itemize}
		\item[(1)] When $p=1$, Theorem \ref{Th-Boundedness} can be regarded as a $q$-analogue of the $L^1$-boundedness result for $E_{\bm{\rho},\lambda;(\bm{\omega})}^{\bm{\gamma}}$ obtained by Saxena \emph{et al.} \cite[p. 541, Theorem 4.1]{Saxena-Kalla-Saxena-2011}.
		\item[(2)] Letting $k=1$ in Theorem \ref{Th-Boundedness}, we obtain
		\begin{equation}\label{Th-Boundedness-Remark-1}
			\|E_{q;\rho,\lambda;(\omega)}^{\gamma} f\|_p \leq \mathfrak{C}_1\|f\|_p, \quad \forall f \in L_q^p[0,a],
		\end{equation}
		where
		\begin{equation}
			\mathfrak{C}_1:=a^{\Re(\lambda)}\frac{(1-q)^{\Re(\lambda)}}{1-q^{\Re(\lambda)}}
			\frac{(-q^{\Re(\lambda)};q)_{\infty}}{(q;q)_{\infty}}
			\frac{(-q^{\Re(\gamma)}|\omega|a^{\Re(\rho)}(1-q)^{\Re(\rho)};q)_{\infty}}{(|\omega|a^{\Re(\rho)}(1-q)^{\Re(\rho)};q)_{\infty}}.
		\end{equation}
		Here, $E_{q;\rho,\lambda;(\omega)}^{\gamma}$ is defined by \eqref{Def-ShaimardanOperator}. 
		Thus, we reproduce the $L_q^1$-boundedness result for $E_{q;\rho,\lambda;(\omega)}^{\gamma}$ due to Shaimardan \emph{et al.} \cite[p. 2009, Proposition 3.4]{Shaimardan-Karimov-Ruzhansky-Mamanazarov-2025}. It also provides a $q$-analogue of the $L^p$-boundedness of the Prabhakar fractional integral operator \cite[p. 25, Eq. (5.3)]{Giusti et al-2020}. The authors are somewhat surprised to note that the $L^p$-boundedness of the Prabhakar operator \cite{Giusti et al-2020} in the non-$q$ case was posed as an incompletely resolved problem in a recently published paper of Ledesma \emph{et al.} \cite[p. 113]{Ledesma et al-2026}, and
		claimed further to have now been fully resolved in this
		very 2026 published paper.
	\end{itemize}
\end{remark}	

\subsection{The semigroup property}

One of the most important properties of a fractional integral is its semigroup property. The semigroup property of the Riemann-Liouville $q$-fractional integral operator $I_{q}^{\alpha}$ is given by (\cite[p. 4432, Lemma 2.3]{Shaimardan-Persson-Tokmagambetov-2020}; see also \cite[p. 505, Lemma 3]{Castro-2026})
\[
I_{q}^{\alpha}I_{q}^{\beta}f(x)
=I_{q}^{\beta}I_{q}^{\alpha}f(x)
=I_{q}^{\alpha+\beta}f(x), \quad f\in L_q^p[0,a]. 
\]

The following result follows from the Luo--Zhou theory (see \cite[p. 13, Theorem 4.4]{Luo-Zhou-2026} and \cite[p. 15, Corollary 4.8]{Luo-Zhou-2026}) (in view of the details given in Subsection \ref{Introduction-2}).
\begin{theorem}\label{Th-semigroup}
	Let $\lambda_1, \lambda_2 \in \mathbb{C}$ with $\Re(\lambda_1) > 0$ and $\Re(\lambda_2) > 0$. Then, 
	\begin{equation}\label{Th-semigroup-1}
		E_{q;\bm{\rho},\lambda_1;(\bm{\omega})}^{\bm{\alpha}} 
		\big( E_{q;\bm{\rho},\lambda_2;(\bm{\omega}\circ q^{\bm{\alpha}})}^{\bm{\beta}} f\big)(x) 
		=\big( E_{q;\bm{\rho},\lambda_1+\lambda_2;(\bm{\omega})}^{{\bm{\alpha}+\bm{\beta}}} f\big)(x),
	\end{equation}
	where $f\in L_q^p[0,a]$ and $\bm{\omega}\circ q^{\bm{\alpha}}=(\omega_1 q^{\alpha_1},\cdots, \omega_k q^{\alpha_k})$. In particular, we have
	\begin{equation}\label{Th-semigroup-2}
		E_{q;\bm{\rho},\lambda_1;(\bm{\omega})}^{-\bm{\gamma}} 
		\big(E_{q;\bm{\rho},\lambda_2;(\bm{\omega}\circ q^{-\bm{\gamma}})}^{\bm{\gamma}} f\big)(x) 
		=(I_{q}^{\lambda_1+\lambda_2}f)(x)
	\end{equation}
	and
	\begin{equation}\label{Th-semigroup-3}
		E_{q;\bm{\rho},\lambda_1;(\bm{\omega})}^{\bm{\gamma}} 
		\big(E_{q;\bm{\rho},\lambda_2;(\bm{\omega}\circ q^{\bm{\gamma}})}^{-\bm{\gamma}} f\big)(x) 
		=(I_{q}^{\lambda_1+\lambda_2}f)(x).
	\end{equation}
\end{theorem}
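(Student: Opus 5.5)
The plan is to reduce the semigroup identity to a coefficient identity by expanding both operators as series of Riemann--Liouville $q$-fractional integrals, using the representation \eqref{Th-Boundedness-Proof-1} from the proof of Theorem \ref{Th-Boundedness}. Writing $a_{\mathbf{n}}(\bm{\alpha},\bm{\omega}):=\prod_{j}\frac{(q^{\alpha_j};q)_{n_j}}{(q;q)_{n_j}}\omega_j^{n_j}$, we have
\[
E_{q;\bm{\rho},\lambda_1;(\bm{\omega})}^{\bm{\alpha}}\big(E_{q;\bm{\rho},\lambda_2;(\bm{\omega}\circ q^{\bm{\alpha}})}^{\bm{\beta}}f\big)
=\sum_{\mathbf{m}}\sum_{\mathbf{n}}a_{\mathbf{m}}(\bm{\alpha},\bm{\omega})\,a_{\mathbf{n}}(\bm{\beta},\bm{\omega}\circ q^{\bm{\alpha}})\,I_q^{\lambda_1+\langle\bm{\rho},\mathbf{m}\rangle}I_q^{\lambda_2+\langle\bm{\rho},\mathbf{n}\rangle}f .
\]
The first step is to justify moving $E^{\bm{\alpha}}_{q;\bm{\rho},\lambda_1;(\bm{\omega})}$ inside the inner series. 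Since the inner series converges in $L_q^p[0,a]$ (its terms have norms bounded by the convergent majorant $C\|f\|_p$ from \eqref{Th-Boundedness-Proof-2}), and the outer operator is bounded on $L_q^p$ by Theorem \ref{Th-Boundedness}, it may be applied termwise. Condition \eqref{Th-Boundedness-1} is needed for both $\bm{\omega}$ and $\bm{\omega}\circ q^{\Re\bm{\alpha}}$; I assume $a$ is small enough that both hold.

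Next, I will apply the semigroup law $I_q^{\mu}I_q^{\nu}=I_q^{\mu+\nu}$ on $L_q^p$ to each term. The resulting double series is absolutely convergent in $L_q^p$-norm, since it is dominated by a product of two $q$-binomial majorant series. This allows regrouping by $\mathbf{N}=\mathbf{m}+\mathbf{n}$, componentwise. The coefficient of $I_q^{\lambda_1+\lambda_2+\langle\bm{\rho},\mathbf{N}\rangle}f$ then factorizes over $j$ as
\[
\omega_j^{N_j}\sum_{m+n=N_j}\frac{(q^{\alpha_j};q)_m}{(q;q)_m}\frac{(q^{\beta_j};q)_n}{(q;q)_n}q^{\alpha_j n}.
\]
The key step, and the main place where care is needed, is the $q$-Vandermonde-type convolution
\[
\sum_{m+n=N}\frac{(a;q)_m}{(q;q)_m}\frac{(b;q)_n}{(q;q)_n}a^{n}=\frac{(ab;q)_N}{(q;q)_N},\qquad a=q^{\alpha_j},\ b=q^{\beta_j}.
\]
I will prove this by generating functions from the $q$-binomial theorem \eqref{Th-qBinomial}. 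For $|z|<1$,
\[
\frac{(az;q)_\infty}{(z;q)_\infty}\cdot\frac{(abz;q)_\infty}{(az;q)_\infty}=\frac{(abz;q)_\infty}{(z;q)_\infty},
\]
and the second factor is the $q$-binomial series in the variable $az$. Comparing coefficients of $z^N$ gives exactly the identity, including the factor $a^n$. This explains the shift $\bm{\omega}\circ q^{\bm{\alpha}}$ in the second operator.

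Substituting the convolution back turns the regrouped series into \eqref{Th-Boundedness-Proof-1} with parameters $\bm{\alpha}+\bm{\beta}$, $\lambda_1+\lambda_2$ and $\bm{\omega}$, which proves \eqref{Th-semigroup-1}.

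For the special cases, take $\bm{\alpha}=-\bm{\gamma}$ and $\bm{\beta}=\bm{\gamma}$ (respectively the reverse). Then $(q^0;q)_N=\delta_{N0}$, so only the $\mathbf{N}=\mathbf{0}$ term survives. This gives $I_q^{\lambda_1+\lambda_2}f$, i.e. \eqref{Th-semigroup-2} and \eqref{Th-semigroup-3}.

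The main obstacle I expect is the analytic justification rather than the algebra. Termwise application and regrouping require absolute convergence of the double series in $L_q^p$. This uses the bound $\mathsf{k}_q(a;\lambda+\langle\bm{\rho},\mathbf{n}\rangle)\lesssim a^{\Re\lambda+\langle\Re\bm{\rho},\mathbf{n}\rangle}/|\Gamma_q(\cdot)|$ together with \eqref{Th-Convergence-Proof-2}, under the smallness condition on $a$ for both $\bm{\omega}$ and $\bm{\omega}\circ q^{\bm{\alpha}}$.
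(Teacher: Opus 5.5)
Your proposal is correct and follows the paper's proof: expand both operators via \eqref{Th-Boundedness-Proof-1}, compose using the semigroup law for $I_q^{\mu}$, regroup by $\mathbf{m}+\mathbf{n}$, collapse each coordinate sum with the $q$-Vandermonde convolution, and specialize to $\bm{\alpha}=\mp\bm{\gamma}$, $\bm{\beta}=\pm\bm{\gamma}$ using $(q^0;q)_N=\delta_{N0}$. The only differences are refinements: you derive the convolution from the $q$-binomial theorem by generating functions rather than citing Gasper--Rahman, and you justify the termwise application and the regrouping, including the smallness condition on $a$ for $\bm{\omega}\circ q^{\bm{\alpha}}$. The paper leaves these points implicit.
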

\begin{proof}
	The main idea of the proof originates from Theorem 4.4 and Corollary 4.8 in \cite{Luo-Zhou-2026}. However, for the sake of completeness and for the convenience of the reader, we provide a detailed proof.
	
	Using \eqref{Th-Boundedness-Proof-1}, we have
	\begin{align*}
		&E_{q;\bm{\rho},\lambda_1;(\bm{\omega})}^{\bm{\alpha}} 
		\big(E_{q;\bm{\rho},\lambda_2;(\bm{\omega}\circ q^{\bm{\alpha}})}^{\bm{\beta}} f\big)(x) 
		=\sum_{\mathbf{m}=\mathbf{0}}^{\infty}\sum_{\mathbf{n}=\mathbf{0}}^{\infty} \frac{(q^{\alpha_1};q)_{m_1}}{(q;q)_{m_1}} 
		\cdots \frac{(q^{\alpha_k};q)_{m_k}}{(q;q)_{m_k}}\\
		&\hspace{0.5cm}\cdot\frac{(q^{\beta_1};q)_{n_1}}{(q;q)_{n_1}} 
		\cdots \frac{(q^{\beta_k};q)_{n_k}}{(q;q)_{n_k}}
		\omega_1^{m_1+n_1}\cdots\omega_k^{m_k+n_k}
		q^{\langle\bm{\alpha},\mathbf{n}\rangle}
		I_q^{\lambda_1+\lambda_2+\langle \bm{\rho},\mathbf{m}+\mathbf{n}\rangle} f(x).
	\end{align*}
	Next, we rearrange the series to obtain:
	\begin{align*}
		&E_{q;\bm{\rho},\lambda_1;(\bm{\omega})}^{\bm{\alpha}} 
		\big(E_{q;\bm{\rho},\lambda_2;(\bm{\omega}\circ q^{\bm{\alpha}})}^{\bm{\beta}} f\big)(x)\\
		&\hspace{1cm}=\sum_{\mathbf{m}=\mathbf{0}}^{\infty}\left(\prod_{j=1}^{k}\sum_{n_j=0}^{m_j} \frac{(q^{\alpha_j};q)_{m_j-n_j}}{(q;q)_{m_j-n_j}} 
		\frac{(q^{\beta_j};q)_{n_j}}{(q;q)_{n_j}} q^{\alpha_j n_j}\right)
		\omega_1^{m_1}\cdots\omega_k^{m_k}
		I_q^{\lambda_1+\lambda_2+\langle \bm{\rho},\mathbf{m}\rangle} f(x).
	\end{align*}
	Using now the $q$-binomial theorem \cite[p. 25]{Gasper-Rahman-Book-2004}:
	\[
	\frac{(q^{\alpha+\beta};q)_m}{(q;q)_m}=\sum_{n=0}^{m}\frac{(q^\alpha;q)_{n}(q^\beta;q)_{m-n}}{(q;q)_{n}(q;q)_{m-n}}
	q^{n\beta}, 
	\]
	we get
	\begin{align*}
		&E_{q;\bm{\rho},\lambda_1;(\bm{\omega})}^{\bm{\alpha}} 
		\big(E_{q;\bm{\rho},\lambda_2;(\bm{\omega}\circ q^{\bm{\alpha}})}^{\bm{\beta}} f\big)(x)\\
		&\hspace{1cm}=\sum_{\mathbf{m}=\mathbf{0}}^{\infty}
		\frac{(q^{\alpha_1+\beta_1};q)_m}{(q;q)_m}
		\cdots \frac{(q^{\alpha_k+\beta_k};q)_m}{(q;q)_m}
		\omega_1^{m_1}\cdots\omega_k^{m_k}
		I_q^{\lambda_1+\lambda_2+\langle \bm{\rho},\mathbf{m}\rangle} f(x)\\
		&\hspace{1cm}=\big( E_{q;\bm{\rho},\lambda_1+\lambda_2;(\bm{\omega})}^{{\bm{\alpha}+\bm{\beta}}} f\big)(x).
	\end{align*}
	This completes the proof of the desired result \eqref{Th-semigroup-1} of Theorem \ref{Th-semigroup}.  
	
	Setting $\bm{\alpha}=-\bm{\gamma}$ and $\bm{\beta}=\bm{\gamma}$ in \eqref{Th-semigroup-1}, and noting that $(q^0;q)_n=0$ for $n\in\mathbb{Z}_{\geq1}$ and $(q^0;q)_{0}=1$, we readily obtain in view of \eqref{Def-OurOperator} (see also \eqref{Def-ShaimardanOperator}) the desired result \eqref{Th-semigroup-2}. By similar substitutions in \eqref{Th-semigroup-1}, we can easily obtain the other result \eqref{Th-semigroup-3}.	
\end{proof}
\begin{remark}
	~
	\begin{itemize}
		\item[(1)] Theorem \ref{Th-semigroup} is a $q$-analogue of Theorem 6.1 in \cite{Saxena-Kalla-Saxena-2011}. Compared with the non‑$q$ case, formula \eqref{Th-semigroup-1} is more complicated because it contains the factor $\bm{\omega}\circ q^{\bm{\alpha}}$. 
		\item[(2)] The special case that $k=1$ in \eqref{Th-semigroup-1} reduces to
		\[
		E_{q;\rho,\lambda_1;(\omega)}^{\alpha} 
		\big( E_{q;\rho,\lambda_2;(\omega q^{\alpha})}^{\beta} f\big)(x) 
		=\big( E_{q;\rho,\lambda_1+\lambda_2;(\omega)}^{\alpha+\beta} f\big)(x),
		\]
		which can also be derived from the result of Shaimardan \emph{et al.} \cite[p. 2008, Lemma 3.3]{Shaimardan-Karimov-Ruzhansky-Mamanazarov-2025}.
	\end{itemize}
\end{remark}

\begin{corollary}\label{Cor-semigroup-1}
	Let $\lambda>0$ and let $n = [\lambda]+1$ be a positive integer such that $n-1< \lambda\leq n$. Then, for any $f \in L_q^1[0,a]$, we have
	\begin{equation}\label{Cor-semigroup-1-1}
		D_q^n\big(E_{q;\bm{\rho},n-\lambda;(\bm{\omega})}^{-\bm{\gamma}} 
		\big(E_{q;\bm{\rho},\lambda;(\bm{\omega}\circ q^{-\bm{\gamma}})}^{\bm{\gamma}} f\big)\big)(x) 
		=f(x).
	\end{equation}
\end{corollary}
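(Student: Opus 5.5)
The plan is to reduce the left-hand side of \eqref{Cor-semigroup-1-1} to an ordinary Riemann--Liouville $q$-integral using Theorem \ref{Th-semigroup}. Then we need only the fact that $D_q^n$ undoes $I_q^n$.

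\textbf{Case $n-1<\lambda<n$.} Here $\Re(n-\lambda)>0$ and $\Re(\lambda)>0$, so Theorem \ref{Th-semigroup} applies with $\lambda_1=n-\lambda$ and $\lambda_2=\lambda$. Formula \eqref{Th-semigroup-2} then gives
\[
E_{q;\bm{\rho},n-\lambda;(\bm{\omega})}^{-\bm{\gamma}}\big(E_{q;\bm{\rho},\lambda;(\bm{\omega}\circ q^{-\bm{\gamma}})}^{\bm{\gamma}} f\big)(x)=(I_q^{n}f)(x),
\]
valid for $f\in L_q^1[0,a]$ with $x$ in the admissible range \eqref{Condition on x}.

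Next we check that the $n$-fold integral $I_q^n$ obtained this way agrees with the iterated $q$-integral $I_q\cdots I_q$, so that $D_q^n$ can be peeled off one factor at a time. This follows from the semigroup property $I_q^{\alpha}I_q^{\beta}=I_q^{\alpha+\beta}$ on $L_q^p$, quoted before Theorem \ref{Th-semigroup}, together with $I_q^1=I_q$. The latter is immediate from \eqref{Def-qRLFIO}, since $(qt/x;q)_0=1$ and $\Gamma_q(1)=1$. Hence $I_q^n f=I_q(I_q^{n-1}f)$. Each intermediate function $I_q^{j}f$ again lies in $L_q^1[0,a]$ by Lemma \ref{Lemma-LuoZhou}, so $I_q$ is well defined on it. Applying the identity $D_qI_qg=g$ from Subsection \ref{Introduction-2} $n$ times yields $D_q^nI_q^nf=f$, which proves \eqref{Cor-semigroup-1-1}.

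\textbf{Boundary case $\lambda=n$.} Then $\lambda_1=n-\lambda=0$, and Theorem \ref{Th-semigroup} does not directly apply because it requires $\Re(\lambda_1)>0$. This is the step that needs care. I would interpret the outer operator with $\lambda_1=0$ through the series representation \eqref{Th-Boundedness-Proof-1}, with the convention $I_q^0=\mathrm{id}$. The Cauchy-product computation in the proof of Theorem \ref{Th-semigroup} then goes through verbatim. The reason is that the only ingredients used are $I_q^{\alpha}I_q^{\beta}=I_q^{\alpha+\beta}$ for $\alpha\ge 0$ and the $q$-Vandermonde identity, and both survive when $\alpha=0$.

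We also need the double series to converge absolutely, so that the rearrangement is justified. This follows as in the proof of Theorem \ref{Th-Boundedness}: the terms are dominated by the bounds $\mathsf{k}_q(a;\cdot)$ combined with \eqref{Th-Convergence-Proof-2}, under condition \eqref{Th-Boundedness-1}. The expected outcome is again $I_q^{n}f$, and the argument concludes as in the first case.

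The main obstacle I anticipate is this degenerate endpoint and the related well-definedness of the $q$-integrals on the interval $[0,a]$. If the series interpretation at $\lambda_1=0$ is unwelcome, a fallback is available for the case $\lambda=n$. One simply reads the left-hand side as $D_q^n E_{q;\bm{\rho},0;(\bm{\omega})}^{-\bm{\gamma}}(\cdots)$, where $E_{q;\bm{\rho},0;(\bm{\omega})}^{-\bm{\gamma}}$ is defined by \eqref{Th-Boundedness-Proof-1}, and then repeats the same calculation.
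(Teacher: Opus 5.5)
Your proposal is correct and follows the paper's proof. The paper also applies \eqref{Th-semigroup-2} with $\lambda_1=n-\lambda$, $\lambda_2=\lambda$ to get $I_q^nf$ and then uses $D_q^nI_q^nf=f$; your check that the fractional $I_q^n$ coincides with the iterated $q$-integral (via $I_q^1=I_q$ and the semigroup law) is a detail the paper leaves implicit, and your separate treatment of $\lambda=n$ (which the paper glosses over, since $\lambda_1=0$ lies outside Theorem \ref{Th-semigroup}) is a reasonable extension by the series convention, though with $n=[\lambda]+1$ as stated one always has $0<n-\lambda\le 1$, so that case never actually arises.
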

\begin{proof}
	Let $n-1<\lambda\leq n$, and let $\lambda_1=n-\lambda$ and $\lambda_2=\lambda$ in  Eq. \eqref{Th-semigroup-2} of Theorem \ref{Th-semigroup}. We obtain then
	\begin{equation}\label{Cor-semigroup-Proof-1}
		E_{q;\bm{\rho},n-\lambda;(\bm{\omega})}^{-\bm{\gamma}} 
		\big(E_{q;\bm{\rho},\lambda;(\bm{\omega}\circ q^{-\bm{\gamma}})}^{\bm{\gamma}} f\big)(x) 
		=(I_{q}^n f)(x).
	\end{equation}
	Applying $D_q^n$ on both sides of \eqref{Cor-semigroup-Proof-1} gives immediately 
	\[
	D_q^n\big(E_{q;\bm{\rho},n-\lambda;(\bm{\omega})}^{-\bm{\gamma}} 
	\big(E_{q;\bm{\rho},\lambda;(\bm{\omega}\circ q^{-\bm{\gamma}})}^{\bm{\gamma}} f\big)\big)(x) 
	=D_q^n (I_{q}^n f)(x)=f(x).
	\]
\end{proof}

A function $f:[0,a]\rightarrow\mathbb{C}$ is called \emph{$q$-absolutely continuous} if there exists a $\varphi \in L^1_q[0,a]$ such that 
\[
f(x)=f(0)+\int_0^x \varphi(t)\mathrm{d}_q t
\]
for all $x\in[0,a]$ (see \cite[p. 4433, Definition 2.4]{Shaimardan-Persson-Tokmagambetov-2020}). Let $AC_q[0,a]$ denote the set of all $q$-absolutely continuous functions on $[0,a]$. Also, let 
\[
AC_q^n[0,a]:=\left\{f:[0,a]\rightarrow\mathbb{C};\begin{array}{l}
	\text{(i)}~~(D_q^{n-1}f)(x) \in AC_q[0,a], \\
	\text{(ii)}~~\text{$D_q^{j}f(x)$ $(j=0,\cdots,n-1)$ are right-continuous at $0$}
\end{array}\right\}.
\]
Obviously, if $f\in AC_q^n[0,a]$, then $D_q^n f\in L_q^1[0,a]$.

\begin{corollary}\label{Cor-semigroup-2}
	Let $n-1<\lambda\leq n$. If $f\in AC_{q}^{n}[0,a]$, then we have
	\begin{equation}\label{Cor-semigroup-2-1}
		E_{q;\bm{\rho},\lambda;(\bm{\omega})}^{\bm{\gamma}} 
		\big(E_{q;\bm{\rho},n-\lambda;(\bm{\omega}\circ q^{\bm{\gamma}})}^{-\bm{\gamma}} (D_q^n f)\big)(x)  =f(x)-\sum_{j=0}^{n-1}\frac{(D_q^j f)(0)}{\Gamma_q(j+1)} x^j.
	\end{equation}
\end{corollary}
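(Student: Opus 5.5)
The natural route is to reduce the composition to a single Riemann--Liouville operator via \eqref{Th-semigroup-3}. Then we use the $q$-Taylor formula for $I_q^n D_q^n f$. Concretely, we take $\lambda_1=\lambda$ and $\lambda_2=n-\lambda$ in \eqref{Th-semigroup-3}, applied to the function $D_q^n f$ in place of $f$. Since $f\in AC_q^n[0,a]$, we have $D_q^n f\in L_q^1[0,a]$, so Theorem \ref{Th-semigroup} is applicable (with $p=1$). This gives
\[
E_{q;\bm{\rho},\lambda;(\bm{\omega})}^{\bm{\gamma}}
\big(E_{q;\bm{\rho},n-\lambda;(\bm{\omega}\circ q^{\bm{\gamma}})}^{-\bm{\gamma}} (D_q^n f)\big)(x)=(I_q^{n}D_q^n f)(x).
\]
One small point needs care when $\lambda=n$. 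Then $\lambda_2=0$, and Theorem \ref{Th-semigroup} requires $\Re(\lambda_2)>0$. In that case we interpret $E^{-\bm{\gamma}}_{q;\bm{\rho},0;(\cdot)}$ via the series \eqref{Th-Boundedness-Proof-1} with $I_q^{0}=\mathrm{id}$. Alternatively, we treat it directly: the left side then reduces to $E^{\bm{\gamma}}_{q;\bm{\rho},n;(\bm{\omega})}$ composed with the series operator. Redoing the series rearrangement from the proof of Theorem \ref{Th-semigroup} still yields $I_q^n$, because only the identity $I_q^{\alpha}I_q^{\beta}=I_q^{\alpha+\beta}$ with $\beta\ge 0$ is used.

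Next I would establish the $q$-Taylor identity
\[
(I_q^n D_q^n f)(x)=f(x)-\sum_{j=0}^{n-1}\frac{(D_q^j f)(0)}{\Gamma_q(j+1)}x^j
\]
by induction on $n$. The base case $n=1$ is the fact recalled in Subsection \ref{Introduction-2}: $I_qD_q g=g-g(0)$ for $g$ right-continuous at $0$. Here the $q$-absolute continuity of $D_q^{n-1}f$ supplies exactly the representation $D_q^{n-1}f(x)=D_q^{n-1}f(0)+I_q D_q^n f(x)$. For the inductive step, write $I_q^{n}D_q^n f=I_q^{n-1}\big(I_q D_q (D_q^{n-1}f)\big)=I_q^{n-1}\big(D_q^{n-1}f-(D_q^{n-1}f)(0)\big)$. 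Then use $I_q^{n-1}1=x^{n-1}/\Gamma_q(n)$, which is the power rule $I_q^\alpha x^{\lambda-1}=\frac{\Gamma_q(\lambda)}{\Gamma_q(\lambda+\alpha)}x^{\lambda+\alpha-1}$ with $\lambda=1$. Finally apply the induction hypothesis to $I_q^{n-1}D_q^{n-1}f$, which is legitimate since $f\in AC_q^{n-1}$ as well: $D_q^{n-2}f$ is $q$-absolutely continuous because $D_q^{n-1}f$ is bounded, hence in $L_q^1$.

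The main obstacle is bookkeeping on hypotheses rather than the algebra. First, one must check that each intermediate $D_q^j f$ is right-continuous at $0$, which is built into $AC_q^n$, so that $I_qD_q$ produces exactly the boundary values. Second, one must confirm that the semigroup identity holds for $D_q^n f$ in the endpoint case $\lambda=n$. Combining the two displays gives \eqref{Cor-semigroup-2-1}.
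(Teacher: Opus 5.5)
Your proposal is correct and follows the paper's proof: take $\lambda_1=\lambda$, $\lambda_2=n-\lambda$ in \eqref{Th-semigroup-3}, apply it to $D_q^n f\in L_q^1[0,a]$ to obtain $I_q^nD_q^nf$, and finish with the $q$-Taylor formula. The differences are that the paper cites that formula from \cite{Stankovc-Rajkovic-Marinkovic-2009} rather than proving it by induction, and it passes over the endpoint $\lambda=n$ (where the hypothesis $\Re(\lambda_2)>0$ of Theorem \ref{Th-semigroup} fails), which your series convention $I_q^0=\mathrm{id}$ handles correctly.
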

\begin{proof} 
	Let $n-1<\lambda\leq n$, and let $\lambda_1=\lambda$ and $\lambda_2=n-\lambda$ in \eqref{Th-semigroup-3}. We obtain
	\begin{equation}\label{Cor-semigroup-2-Proof-1}
		E_{q;\bm{\rho},\lambda;(\bm{\omega})}^{\bm{\gamma}} 
		\big(E_{q;\bm{\rho},n-\lambda;(\bm{\omega}\circ q^{\bm{\gamma}})}^{-\bm{\gamma}} f\big)(x) 
		=(I_{q}^n f)(x).
	\end{equation}
	Replacing $f(x)$ by $D_q^n f(x)\in L_q^1[0,a]$ in \eqref{Cor-semigroup-2-Proof-1}, we obtain 
	\begin{equation}\label{Cor-semigroup-2-Proof-2}
		E_{q;\bm{\rho},\lambda;(\bm{\omega})}^{\bm{\gamma}} 
		\big(E_{q;\bm{\rho},n-\lambda;(\bm{\omega}\circ q^{\bm{\gamma}})}^{-\bm{\gamma}} (D_q^n f)\big)(x) 
		=(I_{q}^n (D_q^n f))(x).
	\end{equation}
	By recalling the $q$-Taylor formula \cite[p. 301, Eq. (10)]{Stankovc-Rajkovic-Marinkovic-2009}
	\begin{align}\label{Cor-semigroup-2-Proof-3}
		I_q^n (D_q^n f)(x)=f(x)-\sum_{j=0}^{n-1}\frac{(D_q^j f)(0^+)}{\Gamma_q(j+1)} x^j,
	\end{align}
	where $D_q^n f\in L_q^1[0,a]$ and $(D_q^{j}f)(0^+)$ $(j=0,\cdots,n-1)$ exists, then \eqref{Cor-semigroup-2-Proof-2} in conjuction with \eqref{Cor-semigroup-2-Proof-3} gives the desired result \eqref{Cor-semigroup-2-1}. This completes the proof.
\end{proof}

\subsection{$q$-Laplace transforms}

Here, we study the $q$-Laplace transform of the operator $E_{q;\bm{\rho},\lambda;(\bm{\omega})}^{\bm{\gamma}}$ and provide a new insight into the semigroup property. 

\begin{proposition}\label{Prop-LaplaceT-O}
	Let $\lambda\in\mathbb{C}$ $(\Re(\lambda)>0)$,  $\bm{\gamma},\bm{\rho},\bm{\omega}\in\mathbb{C}^k$ $(\Re(\rho_j)>0, j=1,\cdots,k)$. We have
	\begin{equation}\label{Prop-LaplaceT-O-1}
		{}_{q}L_{s}\big(E_{q;\bm{\rho},\lambda;(\bm{\omega})}^{\bm{\gamma}} f\big)(x)
		=s^{-\lambda}(1-q)^{\lambda}{}_q L_{s}f(x)\cdot
		\prod_{j=1}^{k}\frac{(q^{\gamma_j}\omega_j(1-q)^{\rho_j}
			s^{-\rho_j};q)_{\infty}}{(\omega_j(1-q)^{\rho_j}
			s^{-\rho_j};q)_{\infty}},
	\end{equation}
	where $s\in\mathbb{C}$ satisfies $\Re(s)>0$ and 
	$|\omega_j|(1-q)^{\Re(\rho_j)}|s^{-\rho_j}|<1$ $(j=1,\cdots,k)$.
\end{proposition}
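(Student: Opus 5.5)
The plan is to reduce the statement to the $q$-Laplace transform of the Riemann--Liouville $q$-fractional integral and then sum the resulting series with the $q$-binomial theorem \eqref{Th-qBinomial}, exactly as in the proof of Proposition \ref{Prop-LaplaceT}.

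\textbf{Step 1 (series form).} Start from the representation \eqref{Th-Boundedness-Proof-1}:
\[
(E_{q;\bm{\rho},\lambda;(\bm{\omega})}^{\bm{\gamma}} f)(x)=\sum_{\mathbf{n}=\mathbf{0}}^{\infty}\prod_{j=1}^{k}\frac{(q^{\gamma_j};q)_{n_j}}{(q;q)_{n_j}}\omega_j^{n_j}\, I_q^{\lambda+\langle\bm{\rho},\mathbf{n}\rangle}f(x).
\]

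\textbf{Step 2 (termwise transform).} Apply ${}_qL_s$ term by term. The needed ingredient is the $q$-convolution rule
\[
{}_qL_s I_q^{\mu}f=s^{-\mu}(1-q)^{\mu}\,{}_qL_s f \qquad (\Re(\mu)>0).
\]
This rule is standard and is found in Annaby--Mansour \cite[Section 1.9 and Ch.~4]{Annaby-Mansour-Book-2012}. It is consistent with ${}_qL_s z^{\lambda-1}=s^{-\lambda}(1-q)^{\lambda-1}\Gamma_q(\lambda)$ combined with $I_q^\mu z^{\lambda-1}=\frac{\Gamma_q(\lambda)}{\Gamma_q(\lambda+\mu)}z^{\lambda+\mu-1}$. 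If I must justify it directly, I would expand $I_q^\mu f$ as a $q$-series in $f(xq^m)$, insert it into the discrete form of ${}_qL_s$, and sum the inner series using the $q$-binomial theorem.

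Applying the rule with $\mu=\lambda+\langle\bm{\rho},\mathbf{n}\rangle$ factors out $s^{-\lambda}(1-q)^{\lambda}\,{}_qL_s f$. What remains is
\[
\prod_{j=1}^{k}\sum_{n_j=0}^{\infty}\frac{(q^{\gamma_j};q)_{n_j}}{(q;q)_{n_j}}\big(\omega_j(1-q)^{\rho_j}s^{-\rho_j}\big)^{n_j}.
\]

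\textbf{Step 3 (summation).} Sum each factor by \eqref{Th-qBinomial}, which is valid because $|\omega_j|(1-q)^{\Re(\rho_j)}|s^{-\rho_j}|<1$. This gives \eqref{Prop-LaplaceT-O-1}.

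\textbf{Main obstacle: interchanging ${}_qL_s$ with the multiple sum.} Since ${}_qL_s$ is itself a discrete sum $\frac{(q;q)_\infty}{s}\sum_j \frac{q^j}{(q;q)_j}F(s^{-1}q^j)$, the interchange is a Fubini argument for absolutely convergent double series. I would proceed as follows.

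1. Bound $|I_q^{\mu}f(x)|$ pointwise. Using $|(qt/x;q)_{\mu-1}|\le$ a constant uniform in $\mu$, together with the estimate \eqref{Th-Convergence-Proof-2} for $1/|\Gamma_q|$, the bound takes the form $M x^{\Re\mu-1}(1-q)^{\Re\langle\bm\rho,\mathbf n\rangle}\int_0^x|f|\,\mathrm{d}_qt$.
2. The sum over $\mathbf{n}$ is then dominated by a product of geometric-type series in $|\omega_j|(1-q)^{\Re\rho_j}|s^{-\rho_j}|$. These converge under the stated hypotheses; evaluation points lie in $x\le |s|^{-1}$ when $s$ is real, and the complex case is handled by analytic continuation in $s$.
3. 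Assuming ${}_qL_s|f|$ converges, which is implicit in the statement, absolute convergence of the double series follows, and the interchange is justified.

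Finally, as a consistency check, the product of transforms for two operators reproduces Theorem \ref{Th-semigroup} via $\frac{(q^{\alpha}u;q)_\infty}{(u;q)_\infty}\cdot\frac{(q^{\alpha+\beta}u;q)_\infty}{(q^{\alpha}u;q)_\infty}=\frac{(q^{\alpha+\beta}u;q)_\infty}{(u;q)_\infty}$.
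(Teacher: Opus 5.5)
Your proof is the paper's proof: expand $E_{q;\bm{\rho},\lambda;(\bm{\omega})}^{\bm{\gamma}}f$ via \eqref{Th-Boundedness-Proof-1}, apply ${}_qL_s I_q^{\mu}f=s^{-\mu}(1-q)^{\mu}\,{}_qL_s f$ term by term, and sum each factor with the $q$-binomial theorem \eqref{Th-qBinomial}. Your justification of the interchange is an addition that the paper omits, with one correction: for complex $s$, do not appeal to analytic continuation. That transform samples $f$ at the non-real points $s^{-1}q^{m}$ and need not be analytic in $s$. Instead, run the same absolute-convergence bound directly at those points, using $|x^{\rho_j n_j}|=|s^{-\rho_j}|^{n_j}q^{m\Re(\rho_j)n_j}$ for $x=s^{-1}q^{m}$.
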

\begin{proof}
	Recall that if $f\in L_q^1$, we have (\cite[p. 271, Eq. (9.1)]{Annaby-Mansour-Book-2012})
	\begin{equation}\label{Prop-LaplaceT-O-Proof-1}
		{}_{q}L_{s} (I_q^{\alpha}f)(x)=s^{-\alpha}(1-q)^{\alpha} {}_q L_{s}f(x)\quad (\Re(\alpha)>0).
	\end{equation}
	Applying \eqref{Prop-LaplaceT-O-Proof-1} to \eqref{Th-Boundedness-Proof-1} yields 
	\begin{align}
		{}_{q}L_{s}\big(E_{q;\bm{\rho},\lambda;(\bm{\omega})}^{\bm{\gamma}} f\big)(x) 
		&=s^{-\lambda}(1-q)^{\lambda}{}_q L_{s}f(x)\notag\\
		&\hspace{1cm}\cdot \sum_{\mathbf{n}=\mathbf{0}}^{\infty} \frac{(q^{\gamma_1};q)_{n_1}}{(q;q)_{n_1}} 
		\cdots \frac{(q^{\gamma_k};q)_{n_k}}{(q;q)_{n_k}}
		\omega_1^{n_1}\cdots\omega_k^{n_k}
		\cdot 
		s^{-\langle \bm{\rho}, \mathbf{n}\rangle}
		(1-q)^{\langle \bm{\rho}, \mathbf{n}\rangle}\notag\\
		&=s^{-\lambda}(1-q)^{\lambda}{}_q L_{s}f(x)\notag\\
		&\hspace{1cm}\cdot \prod_{j=1}^{k}\left(\sum_{n_j=0}^{\infty} \frac{(q^{\gamma_j};q)_{n_j}}{(q;q)_{n_j}} 
		\left(\omega_j(1-q)^{\rho_j}
		s^{-\rho_j}\right)^{n_j}
		\right)\notag\\
		&=s^{-\lambda}(1-q)^{\lambda}{}_q L_{s}f(x)\cdot
		\prod_{j=1}^{k}\frac{(q^{\gamma_j}\omega_j(1-q)^{\rho_j}
			s^{-\rho_j};q)_{\infty}}{(\omega_j(1-q)^{\rho_j}
			s^{-\rho_j};q)_{\infty}},
	\end{align}
	where $|\omega_j|(1-q)^{\Re(\rho_j)}|s^{-\rho_j}|<1$ $(j=1,\cdots,k)$. 
\end{proof}
\begin{remark}
	Using Proposition \ref{Prop-LaplaceT}, we can express the product of quotients of $q$-Pochhammer symbols as the $q$-Laplace transform of the multivariate $q$-Mittag-Leffler function as follows: 
	\begin{equation}\label{Prop-LaplaceT-O-Remark-1}
		{}_{q}L_{s}\big(E_{q;\bm{\rho},\lambda;(\bm{\omega})}^{\bm{\gamma}} f\big)(x)
		=(1-q)\cdot {}_q L_{s}f(x)\cdot {}_{q}L_{s}z^{\lambda-1}E_{\bm{\rho},\lambda}^{\bm{\gamma}}(q;\omega_1 z^{\rho_1},\cdots,\omega_k z^{\rho_k}).
	\end{equation}
\end{remark}

From the viewpoint of $q$-Laplace transform, we can provide an \emph{alternative derivation} of the semigroup property \eqref{Th-semigroup-1}. Actually, we have
\begin{align}\label{Prop-LaplaceT-O-Remark-2}
	&{}_{q}L_{s}\big(E_{q;\bm{\rho},\lambda_1;(\bm{\omega})}^{\bm{\alpha}} 
	\big( E_{q;\bm{\rho},\lambda_2;(\bm{\omega}\circ q^{\bm{\alpha}})}^{\bm{\beta}} f\big)\big)(x)\notag\\
	&\hspace{0.5cm}={}_q L_{s}\big( E_{q;\bm{\rho},\lambda_2;(\bm{\omega}\circ q^{\bm{\alpha}})}^{\bm{\beta}} f\big)(x)\cdot s^{-\lambda_1}(1-q)^{\lambda_1}\prod_{j=1}^{k}\frac{(q^{\alpha_j}\omega_j(1-q)^{\rho_j}
		s^{-\rho_j};q)_{\infty}}{(\omega_j(1-q)^{\rho_j}
		s^{-\rho_j};q)_{\infty}},
\end{align}
where 
\begin{equation}\label{Prop-LaplaceT-O-Remark-3}
	{}_q L_{s}\big( E_{q;\bm{\rho},\lambda_2;(\bm{\omega}\circ q^{\bm{\alpha}})}^{\bm{\beta}} f\big)(x)
	={}_q L_{s}f(x)\cdot s^{-\lambda_2}(1-q)^{\lambda_2}
	\prod_{j=1}^{k}\frac{(q^{\alpha_j+\beta_j}\omega_j(1-q)^{\rho_j}
		s^{-\rho_j};q)_{\infty}}{(q^{\alpha_j}\omega_j(1-q)^{\rho_j}
		s^{-\rho_j};q)_{\infty}}.
\end{equation}
Combining \eqref{Prop-LaplaceT-O-Remark-2} and \eqref{Prop-LaplaceT-O-Remark-3}, we obtain
\begin{align*}
	&{}_{q}L_{s}\big(E_{q;\bm{\rho},\lambda_1;(\bm{\omega})}^{\bm{\alpha}} 
	\big( E_{q;\bm{\rho},\lambda_2;(\bm{\omega}\circ q^{\bm{\alpha}})}^{\bm{\beta}} f\big)\big)(x)\\
	&\hspace{0.5cm}={}_q L_{s}f(x)\cdot s^{-\lambda_1-\lambda_2}(1-q)^{\lambda_1+\lambda_2}
	\prod_{j=1}^{k}\frac{(q^{\alpha_j+\beta_j}\omega_j(1-q)^{\rho_j}
		s^{-\rho_j};q)_{\infty}}{(\omega_j(1-q)^{\rho_j}
		s^{-\rho_j};q)_{\infty}}\\
	&\hspace{0.5cm}={}_{q}L_{s}(E_{q;\bm{\rho},\lambda_1+\lambda_2;(\bm{\omega})}^{\bm{\alpha}+\bm{\beta}} f)(x).
\end{align*}

\subsection{Construction of the inverse operators}

Inspired by the Corollary \ref{Cor-semigroup-1}, we can construct a right inverse for the operator $E_{q;\bm{\rho},\lambda;(\bm{\omega})}^{\bm{\gamma}}$. 

\begin{definition}\label{Def-RLtype}
	Let $f\in L_q^1[0,a]$, so that $E_{q;\bm{\rho},n-\lambda;(\bm{\omega})}^{-\bm{\gamma}} f\in AC_q^n[0,a]$. The Riemann-Liouville-type $q$-fractional derivative $\mathcal{D}_{q;\bm{\rho},\lambda;(\bm{\omega})}^{\bm{\gamma}}$ is defined by
	\begin{equation}\label{Def-RLtype-1}
		(\mathcal{D}_{q;\bm{\rho},\lambda;(\bm{\omega})}^{\bm{\gamma}} f)(x)
		=D_q^n\big(E_{q;\bm{\rho},n-\lambda;(\bm{\omega})}^{-\bm{\gamma}} f\big)(x),
	\end{equation}
	where $n=[\lambda]+1$.
\end{definition}
\begin{remark}\label{Def-RLtype-Remark}
	~
	\begin{itemize}
		\item[(1)] In addition to Corollary \ref{Cor-semigroup-1}, we can also justify Definition \ref{Def-RLtype} from the perspective of integral equations. Consider the $q$-integral equation
		\[
		(E_{q;\bm{\rho},\lambda;(\bm{\omega}\circ q^{-\bm{\gamma}})}^{\bm{\gamma}}f)(x)=g(x),
		\]
		where $g$ is a known function. Applying the $q$-Laplace transform on both sides of the above equation, and taking into account Propositions \ref{Prop-LaplaceT-O} and \ref{Prop-LaplaceT}, we obtain 
		\begin{align*}
			{}_{q}L_s f(x)
			&={}_{q}L_s g(x)\cdot\left(\frac{s}{1-q}\right)^n\cdot s^{-(n-\lambda)}(1-q)^{n-\lambda}\prod_{j=1}^{k}\frac{(q^{-\gamma_j}\omega_j(1-q)^{\rho_j}
				s^{-\rho_j};q)_{\infty}}{(\omega_j(1-q)^{\rho_j}
				s^{-\rho_j};q)_{\infty}}\\
			&=(1-q)\cdot\left(\frac{s}{1-q}\right)^n\cdot {}_{q}L_s g(x)\cdot
			{}_{q}L_{s}z^{n-\lambda-1}E_{\bm{\rho},n-\lambda}^{-\bm{\gamma}}(q;\omega_1 z^{\rho_1},\cdots,\omega_k z^{\rho_k})\\
			&=\left(\frac{s}{1-q}\right)^n\cdot {}_{q}L_{s}(E_{q;\bm{\rho},n-\lambda;(\bm{\omega})}^{-\bm{\gamma}} g)(x).
		\end{align*}
		Since (\cite[p. 32, Rule 2]{Annaby-Mansour-Book-2012})
		\[
		{}_{q}L_{s}(D_q^n \varphi)(x)=\left(\frac{s}{1-q}\right)^n {}_{q}L_{s}\varphi(x),
		\]
		provided that $D_q^j \varphi(0^+)=0$ $(j=0,1,\cdots, n-1)$, we have 
		\[
		{}_{q}L_s f(x)={}_{q}L_{s}\big(D_q^n E_{q;\bm{\rho},n-\lambda;(\bm{\omega})}^{-\bm{\gamma}} g\big)(x)
		\]
		if $D_q^j (E_{q;\bm{\rho},n-\lambda;(\bm{\omega})}^{-\bm{\gamma}} g)(0^+)=0$ $(j=0,1,\cdots, n-1)$. Then a formal use of the inversion formula (e.g. \cite[p. 33, Theorem 1.24]{Annaby-Mansour-Book-2012}) leeds us to \eqref{Def-RLtype-1} again.
		\item[(2)] When $k=1$, Eq. \eqref{Def-RLtype-1} becomes
		\begin{equation}\label{Def-RLtype-Remark-1}
			(\mathcal{D}_{q;\rho,\lambda;(\omega)}^{\gamma} f)(x)
			=D_q^n\big(E_{q;\rho,n-\lambda;(\omega)}^{-\gamma} f\big)(x).
		\end{equation} 
		A similar operator was examined in the work of Shaimardan \emph{et al.} \cite[p. 2010, Definition 3.5]{Shaimardan-Karimov-Ruzhansky-Mamanazarov-2025}. Also, the operator \eqref{Def-RLtype-Remark-1} itself is the $q$-analogue of the Prabhakar derivative of Riemann-Liouville-type \cite[p. 26, Eq. (5.9)]{Giusti et al-2020}. 
	\end{itemize}
	
\end{remark}

\begin{definition}\label{Def-Caputotype}
	For a function $f\in AC_q^n[0,a]$, the Caputo-type $q$-fractional derivative ${}^C\mathcal{D}_{q;\bm{\rho},\lambda;(\bm{\omega})}^{\bm{\gamma}}$ is defined by
	\begin{equation}\label{Def-Caputotype-1}
		({}^C\mathcal{D}_{q;\bm{\rho},\lambda;(\bm{\omega})}^{\bm{\gamma}}  f)(x) = \big(E_{q;\bm{\rho},n-\lambda;(\bm{\omega}\circ q^{\bm{\gamma}})}^{-\bm{\gamma}} (D_q^n f)\big)(x),
	\end{equation}
	where $n=[\lambda]+1$.
\end{definition}
\begin{remark}
	~
	\begin{itemize}
		\item[(1)] As shown in (1) of Remark \ref{Def-RLtype-Remark} following Definition \ref{Def-RLtype}, the relation in Definition \ref{Def-Caputotype} can also be obtained by solving a particular integral equation. We can consider the $q$-integral equation: 
		\[
		(E_{q;\bm{\rho},\lambda;(\bm{\omega})}^{\bm{\gamma}} f)(x)=g(x), 
		\]
		where $g$ denotes a known function. Using $q$-Laplace transform and Propositions \ref{Prop-LaplaceT-O} and \ref{Prop-LaplaceT}, we obtain
		\begin{align*}
			{}_q L_{s}f(x)
			&={}_q L_{s}g(x)\cdot \left(\frac{s}{1-q}\right)^{n}\cdot s^{-(n-\lambda)}(1-q)^{n-\lambda}\cdot \prod_{j=1}^{k}\frac{(q^{-\gamma_j}\omega_j q^{\gamma_j}(1-q)^{\rho_j}
				s^{-\rho_j};q)_{\infty}}{(\omega_j q^{\gamma_j}(1-q)^{\rho_j}
				s^{-\rho_j};q)_{\infty}}\\
			&=(1-q)\cdot {}_q L_{s}g(x)\cdot \left(\frac{s}{1-q}\right)^{n}\cdot
			{}_{q}L_{s}z^{n-\lambda-1}E_{\bm{\rho},n-\lambda}^{-\bm{\gamma}}(q;\omega_1 q^{\gamma_1} z^{\rho_1},\cdots,\omega_k q^{\gamma_k} z^{\rho_k}).
		\end{align*}
		If $D_q^j g(0^+)=0$ $(j=0,1,\cdots,n-1)$, then 
		\begin{align*}
			{}_q L_{s}f(x)
			&=(1-q)\cdot {}_q L_{s}(D_q^n g)(x)\cdot
			{}_{q}L_{s}z^{n-\lambda-1}E_{\bm{\rho},n-\lambda}^{-\bm{\gamma}}(q;\omega_1 q^{\gamma_1} z^{\rho_1},\cdots,\omega_k q^{\gamma_k} z^{\rho_k})\\
			&=\big(E_{q;\bm{\rho},n-\lambda;(\bm{\omega}\circ q^{\bm{\gamma}})}^{-\bm{\gamma}} (D_q^n g)\big)(x).
		\end{align*}
		\item[(2)] The operator \eqref{Def-Caputotype-1} is a $q$-analogue of the Prabhakar derivative of Caputo-type (i.e., regularized Prabhakar derivative) \cite[p. 26, Eq. (5.10)]{Giusti et al-2020}. When $\bm{\omega}=\mathbf{0}$, it reduces to the Caputo $q$-fractional derivative \cite[p. 148, Definition 5.2.1]{Annaby-Mansour-Book-2012}:
		\begin{equation}\label{Def-CaputoQFD}
			({}^C D_q^\lambda f)(x)
			=\frac{1}{\Gamma_q(n-\lambda)}\int_0^x x^{n-\lambda-1}(qt/x;q)_{n-\lambda-1}(D^nf)(t)\mathrm{d}_qt
			=I_q^{n-\lambda}D_q^n f(x).
		\end{equation}
	\end{itemize}
\end{remark}

\section{$q$-Fractional differential equations involving \texorpdfstring{$\mathcal{D}_{q;\bm{\rho},\lambda;(\bm{\omega})}^{\bm{\gamma}}$}{E} and \texorpdfstring{${}^C\mathcal{D}_{q;\bm{\rho},\lambda;(\bm{\omega})}^{\bm{\gamma}}$}{E}}\label{Section-4}

\subsection{The eigenvalue problem for  \texorpdfstring{$\mathcal{D}_{q;\bm{\rho},\lambda;(\bm{\omega})}^{\bm{\gamma}}$}{E}}
\label{Subsection-4-1}

Let us consider the Cauchy problem
\begin{equation}\label{EigenProb-1}
	\begin{cases}
		\big(\mathcal{D}_{q;\bm{\rho},\lambda;(\bm{\omega})}^{\bm{\gamma}} y\big)(t)=A y(t),\\
		D_q^{j}\big(E_{q;\bm{\rho},n-\lambda;(\bm{\omega})}^{-\bm{\gamma}} y\big)(0^+)=c_j, j=0,\cdots,n-1.
	\end{cases}
\end{equation}
where $\lambda,\rho_1,\cdots,\rho_k>0$, $n=[\lambda]+1$, $\bm{\gamma},\bm{\rho}\in\mathbb{R}^k$, and $A, c_1,\cdots,c_{n-1}\in\mathbb{R}$. Note that in view of \eqref{Th-Boundedness-Proof-1}, we have
\[
D_q^m (E_{q;\bm{\rho},\lambda;(\bm{\omega})}^{\bm{\gamma}} f)(x) 
=(E_{q;\bm{\rho},\lambda-m;(\bm{\omega})}^{\bm{\gamma}} f)(x),
\]
where $\lambda\geq m$ $(m\in\mathbb{Z}_{\geq0})$. Thus, the initial conditions in \eqref{EigenProb-1} may also be expressed as 
\[
\big(E_{q;\bm{\rho},n-\lambda-j;(\bm{\omega})}^{-\bm{\gamma}} y\big)(0^+)=c_j,\quad j=1,\cdots,n-1.
\]

In the non-$q$ case, the standard approach to the above problem is to use the Laplace transform (see, for example, \cite[p. 28]{Giusti et al-2020} and \cite{Luo-Raina-Chapter-2025}). However, in the $q$-analogue, the use of the $q$-Laplace transform to solve $q$-fractional differential equations has received only little attention in the literature, and has been applied to equations involving relatively simple operators (see \cite[Chapter 9]{Annaby-Mansour-Book-2012}, \cite{Garg-Alha-Chanchlani-2015} and \cite{Garg-Chanchlani-2012}). In fact, there seems a reason for this. In what follows, we shall show that even equations involving more complicated operators such as \eqref{EigenProb-1} can also be solved explicitly using the $q$-Laplace transform.

By means of the $q$-Laplace transform ${}_{q}L_{s}$, we have
\begin{align*}
	{}_{q}L_s \big(\mathcal{D}_{q;\bm{\rho},\lambda;(\bm{\omega})}^{\bm{\gamma}} y\big)(t)
	&={}_{q}L_s \big(D_q^n\big(E_{q;\bm{\rho},n-\lambda;(\bm{\omega})}^{-\bm{\gamma}} y\big)\big)(x)\\
	&=s^n(1-q)^{-n}\cdot {}_{q}L_s \big(E_{q;\bm{\rho},n-\lambda;(\bm{\omega})}^{-\bm{\gamma}} y\big)(x)
	-\sum_{j=0}^{n-1} s^{n-j-1}(1-q)^{j-n} D_q^{j}\big(E_{q;\bm{\rho},n-\lambda;(\bm{\omega})}^{-\bm{\gamma}} f\big)(0^+)\\
	&=s^{\lambda}(1-q)^{-\lambda}\cdot{}_q L_{s}y(x)
	\prod_{j=1}^{k}\frac{(q^{-\gamma_j}\omega_j(1-q)^{\rho_j}
		s^{-\rho_j};q)_{\infty}}{(\omega_j(1-q)^{\rho_j}
		s^{-\rho_j};q)_{\infty}}
	-\sum_{j=0}^{n-1}c_j s^{n-1-j}(1-q)^{j-n}.
\end{align*}
So we can convert the original Cauchy problem into the following equation:
\begin{equation}\label{EigenProb-1-Proof-1}
	\left(s^{\lambda}(1-q)^{-\lambda}
	\Phi(s)
	-A\right) {}_{q}L_s y(t)
	=\sum_{j=0}^{n-1} c_j s^{n-1-j}	(1-q)^{j-n},
\end{equation}
where
\[
\Phi(s):=\prod_{j=1}^{k}\frac{(q^{-\gamma_j}\omega_j(1-q)^{\rho_j}
	s^{-\rho_j};q)_{\infty}}{(\omega_j(1-q)^{\rho_j}
	s^{-\rho_j};q)_{\infty}}.
\]
For convenience, we write $\Psi(s)=\dfrac{1}{\Phi(s)}$. Then from \eqref{EigenProb-1-Proof-1}, we have
\begin{align}\label{EigenProb-1-Proof-2}
	{}_{q}L_s y(t)
	&=\frac{1}{s^{\lambda}(1-q)^{-\lambda}\Phi(s)
		-A}\sum_{j=0}^{n-1}c_j s^{n-1-j}(1-q)^{j-n}\notag\\
	&=\frac{s^{-\lambda} (1-q)^\lambda\Psi(s)}{1-A s^{-\lambda} (1-q)^\lambda\Psi(s)}
	\sum_{j=0}^{n-1} c_j s^{n-1-j}(1-q)^{j-n}.
\end{align}
If we assume that $s\in\mathbb{C}$ $(\Re(s)>0)$ satisfies 
\begin{equation}\label{EigenProb-1-Proof-3}
	|A s^{-\lambda} (1-q)^\lambda\Psi(s)|<1,
\end{equation}
then we can write 
\begin{equation}\label{EigenProb-1-Proof-4}
	\frac{s^{-\lambda} (1-q)^\lambda\Psi(s)}{1-A s^{-\lambda} (1-q)^\lambda\Psi(s)}
	=\sum_{\ell=0}^{\infty}A^\ell s^{-\lambda(\ell+1)}(1-q)^{\lambda(\ell+1)}\Psi^{\ell+1}(s). 
\end{equation}
We will below derive from \eqref{EigenProb-1-Proof-3} a more easily verifiable sufficient condition (see (1) in the Remark below).

Substituting \eqref{EigenProb-1-Proof-4} into \eqref{EigenProb-1-Proof-2}, we get
\begin{align}\label{EigenProb-1-Proof-5}
	{}_{q}L_s y(t)&=\sum_{j=0}^{n-1}c_j
	\sum_{\ell=0}^{\infty}A^{\ell}
	s^{-\lambda(\ell+1)+n-j-1}
	(1-q)^{\lambda(\ell+1)-n+j}\Psi^{\ell+1}(s)\notag\\
	&=\sum_{j=0}^{n-1}c_j
	\sum_{\ell=0}^{\infty}A^{\ell}s^{-\lambda(\ell+1)+n-j-1}
	(1-q)^{\lambda(\ell+1)-n+j} \prod_{j=1}^{k}\left(\frac{(\omega_j(1-q)^{\rho_j}
		s^{-\rho_j};q)_{\infty}}{(q^{-\gamma_j}\omega_j(1-q)^{\rho_j}
		s^{-\rho_j};q)_{\infty}}\right)^{\ell+1}.
\end{align}

Next, our task is to express the summand in \eqref{EigenProb-1-Proof-5} as the $q$‑Laplace transform of a multivariate $q$‑Mittag‑Leffler function. To this end, we introduce a useful notation. Let $\oplus$ denote the \emph{concatenation} of vectors. For example, if $\bm{\gamma}=(\gamma_1,\cdots,\gamma_k)$ and $\bm{\rho}=(\rho_1,\cdots,\rho_k)$, then 
$\bm{\gamma}\oplus\bm{\rho}=
(\gamma_1,\cdots,\gamma_k,\rho_1,\cdots,\rho_k)$. By using Proposition \ref{Prop-LaplaceT}, we obtain 
\begin{align*}
	{}_{q}L_s y(t)
	&=\sum_{j=0}^{n-1}c_j
	\sum_{\ell=0}^{\infty}A^{\ell}
	{}_{q}L_s t^{\lambda(\ell+1)-n+j}\\
	&\hspace{0.5cm}\cdot E_{\bm{\rho}_1\oplus\cdots\oplus\bm{\rho}_k, \lambda(\ell+1)-n+j+1}^{\bm{\gamma}_1\oplus\cdots\oplus\bm{\gamma}_k}
	\big(q;\underbrace{\omega_1 q^{-\gamma_1}t^{\rho_1},\cdots,\omega_1 q^{-\gamma_1}t^{\rho_1}}_{\text{$\ell+1$ terms}},\cdots,\underbrace{\omega_k q^{-\gamma_k}t^{\rho_k},\cdots,\omega_k q^{-\gamma_k}t^{\rho_k}}_{\text{$\ell+1$ terms}}\big),
\end{align*}
where we have used the convention that
$\bm{\gamma}_j=(\gamma_j,\cdots,\gamma_j)$ and $\bm{\rho}_j=(\rho_j,\cdots,\rho_j)$ $(j=1,\cdots,k)$. 

Finally, by taking the inverse $q$-Laplace transform, the solution to Cauchy problem \eqref{EigenProb-1} is then given by 
\begin{align*}
	y(t)
	&=\sum_{j=0}^{n-1}c_j
	\sum_{\ell=0}^{\infty}A^{\ell}
	t^{\lambda(\ell+1)-n+j}\\
	&\hspace{0.5cm}\cdot E_{\bm{\rho}_1\oplus\cdots\oplus\bm{\rho}_k, \lambda(\ell+1)-n+j+1}^{\bm{\gamma}_1\oplus\cdots\oplus\bm{\gamma}_k}
	\big(q;\underbrace{\omega_1 q^{-\gamma_1}t^{\rho_1},\cdots,\omega_1 q^{-\gamma_1}t^{\rho_1}}_{\text{$\ell+1$ terms}},\cdots,\underbrace{\omega_k q^{-\gamma_k}t^{\rho_k},\cdots,\omega_k q^{-\gamma_k}t^{\rho_k}}_{\text{$\ell+1$ terms}}\big).
\end{align*}

\begin{remark}\label{Subsection-4-1-Remark}
~
\begin{itemize}
	\item[(1)] Since $\displaystyle 
	\lim_{z\rightarrow0}\frac{(q^{-\gamma}z;q)_{\infty}}{(z;q)_{\infty}}=1$, we have $|\Phi(s)|\rightarrow1$ as $|s|\rightarrow\infty$ $(\Re(s)>0)$, which means that there exists a positive number $M$ so that $|\Phi(s)|>\dfrac{1}{2}$ $(|s|>M, \Re(s)>0)$, or equivalently $|\Psi(s)|<2$  $(|s|>M, \Re(s)>0)$. Thus 
	\[
	|s|>\max\left\{M,(2|A|)^{1/\lambda}(1-q)\right\}
	\]
	is a sufficient condition for \eqref{EigenProb-1-Proof-3} to hold.
	\item[(2)] When $k=1$ in \eqref{EigenProb-1}, the original problem has the form:
	\begin{equation}\label{EigenProb-1-Remark-1}
		\begin{cases}
			\big(\mathcal{D}_{q;\rho,\lambda;(\omega)}^{\gamma} y\big)(t)=A y(t),\\
			D_q^{j}\big(E_{q;\rho,n-\lambda;(\omega)}^{-\gamma} y\big)(0^+)=c_j, j=0,\cdots,n-1,
		\end{cases}
	\end{equation}
	and its solution is given by
	\begin{align*}
		y(t)
		&=\sum_{j=0}^{n-1}c_j
		\sum_{\ell=0}^{\infty}A^{\ell}
		t^{\lambda(\ell+1)-n+j}E_{(\rho,\cdots,\rho), \lambda(\ell+1)-n+j+1}^{(\gamma,\cdots,\gamma)}
		\big(q;\underbrace{\omega q^{-\gamma}t^{\rho},\cdots,\omega q^{-\gamma}t^{\rho}}_{\text{$\ell+1$ terms}}\big),
	\end{align*}
	where
	\begin{align}\label{EigenProb-1-Remark-2}
		&E_{(\rho,\cdots,\rho), \lambda(\ell+1)-n+j+1}^{(\gamma,\cdots,\gamma)}
		\big(q;\underbrace{\omega q^{-\gamma}t^{\rho},\cdots,\omega q^{-\gamma}t^{\rho}}_{\text{$\ell+1$ terms}}\big)\notag\\
		&\hspace{0.5cm}=\sum_{n_1,\cdots,n_{\ell+1}=0}^{\infty}
		\frac{(q^{\gamma};q)_{n_1}}{(q;q)_{n_1}}\cdots\frac{(q^{\gamma};q)_{n_{\ell+1}}}{(q;q)_{n_{\ell+1}}} 
		\frac{(\omega q^{-\gamma}t^{\rho})^{(n_1+\cdots+n_{\ell+1})}}{\Gamma_q(\lambda(\ell+1)-n+j+1 + \rho(n_1+\cdots+n_{\ell+1}))}.
	\end{align}
	First, we point out that \eqref{EigenProb-1-Remark-1} is a $q$-analogue of the eigenvalue problem for the Prabhakar derivative of Riemann-Liouville-type \cite[p. 28, Eq. (5.17)]{Giusti et al-2020}. Second, we want to emphasize that the $(\ell+1)$-fold series given by \eqref{EigenProb-1-Remark-2} can no longer be simplified because when we set $N=n_1+\cdots+n_{\ell+1}$, it becomes 
	\begin{align}\label{EigenProb-1-Remark-3}
		&E_{(\rho,\cdots,\rho), \lambda(\ell+1)-n+j+1}^{(\gamma,\cdots,\gamma)}
		\big(q;\underbrace{\omega q^{-\gamma}t^{\rho},\cdots,\omega q^{-\gamma}t^{\rho}}_{\text{$\ell+1$ terms}}\big)\notag\\
		&\hspace{0.5cm}=\sum_{N=0}\left(\sum_{n_1+\cdots+n_{\ell+1}=N}
		\frac{(q^{\gamma};q)_{n_1}}{(q;q)_{n_1}}\cdots\frac{(q^{\gamma};q)_{n_{\ell+1}}}{(q;q)_{n_{\ell+1}}} \right) 
		\frac{(\omega q^{-\gamma}t^{\rho})^{N}}{\Gamma_q(\lambda(\ell+1)-n+j+1 + \rho N)}.
	\end{align}
	but among them 
	\begin{equation}\label{EigenProb-1-Remark-4}
		\sum_{n_1+\cdots+n_{\ell+1}=N}
		\frac{(q^{\gamma};q)_{n_1}}{(q;q)_{n_1}}\cdots\frac{(q^{\gamma};q)_{n_{\ell+1}}}{(q;q)_{n_{\ell+1}}}
	\end{equation}
	has \emph{no} known summation formula for arbitrary $\ell\in\mathbb{Z}_{\geq2}$. (Compared with the well‑known $q$‑multinomial theorem \cite[p. 25]{Gasper-Rahman-Book-2004}, the summand in formula \eqref{EigenProb-1-Remark-4} is evidently missing a crucial $q$‑factor.) In the non-$q$ case, we indeed have a formula given in \cite[p. 2082, Eq. (34)]{Luo-Raina-2014}
	\begin{equation}\label{EigenProb-1-Remark-5}
		\sum_{m_1+\cdots+m_k=m}\frac{(\gamma)_{m_1}}{m_1!}\cdots\frac{(\gamma)_{m_k}}{m_k!}=\frac{(\gamma k)_{m}}{m!}.
	\end{equation}
	If we denote the limiting expression of \eqref{EigenProb-1-Remark-2} as $q\rightarrow 1^{-}$ by $E_{(\rho,\cdots,\rho), \lambda(\ell+1)-n+j+1}^{(\gamma,\cdots,\gamma)}
	\big(\underbrace{\omega t^{\rho},\cdots,\omega t^{\rho}}_{\text{$\ell+1$ terms}}\big)$, then from \eqref{EigenProb-1-Remark-5}, we immediately obtain:
	\begin{align*}
		E_{(\rho,\cdots,\rho), \lambda(\ell+1)-n+j+1}^{(\gamma,\cdots,\gamma)}
		\big(\underbrace{\omega t^{\rho},\cdots,\omega t^{\rho}}_{\text{$\ell+1$ terms}}\big)
		&=\sum_{N=0}\frac{(\gamma(\ell+1))_{N}}{N!}
		\frac{(\omega t^{\rho})^{N}}{\Gamma(\lambda(\ell+1)-n+j+1 + \rho N)}\\
		&=E_{\rho,\lambda(\ell+1)-n+j+1}^{\gamma(\ell+1)}(\omega t^\rho),
	\end{align*}
	where $E_{\rho,\lambda}^{\gamma}(z)$ denotes the Prabhakar function  
	\cite[p. 17, Eq. (4.1)]{Giusti et al-2020}. This brings us back to the solution to the eigenvalue problem of the Prabhakar derivative of the Riemann-Liouville type \cite[p. 28, Eq. (5.18)]{Giusti et al-2020}. Finally, we mention that \eqref{EigenProb-1-Remark-4} can be viewed as a special value of the $q$-Lagrange polynomials introduced by Alt\i n \emph{et al.} \cite[p. 1132, Eq. (2.3)]{Altin-Erkus-Tasdelen-2006}.
\end{itemize}
\end{remark}
	
\subsection{The eigenvalue problem for \texorpdfstring{${}^C\mathcal{D}_{q;\bm{\rho},\lambda;(\bm{\omega})}^{\bm{\gamma}}$}{E}}

Let us consider the Cauchy problem
\begin{equation}\label{EigenProb-2}
	\begin{cases}
		\big({}^C\mathcal{D}_{q;\bm{\rho},\lambda;(\bm{\omega})}^{\bm{\gamma}} y\big)(t)=A y(t),\\
		D_q^j y(0^+)=c_j, j=0,\cdots,n-1.
	\end{cases}
\end{equation}
By means of the $q$-Laplace transform, we have
\begin{align*}
	{}_{q}L_s\big({}^C\mathcal{D}_{q;\bm{\rho},\lambda;(\bm{\omega})}^{\bm{\gamma}} y\big)(t)
	&={}_{q}L_s \big(E_{q;\bm{\rho},n-\lambda;(\bm{\omega}\circ q^{\bm{\gamma}})}^{-\bm{\gamma}} (D_q^n y)\big)(x)\\
	&={}_q L_{s}(D_q^n y)(x)\cdot s^{-(n-\lambda)}(1-q)^{n-\lambda}\cdot 
	\prod_{j=1}^{k}\frac{(\omega_j(1-q)^{\rho_j}
		s^{-\rho_j};q)_{\infty}}{(q^{\gamma_j}\omega_j(1-q)^{\rho_j}
		s^{-\rho_j};q)_{\infty}}\\
	&=\left({}_{q}L_s y(x)-\sum_{j=0}^{n-1} c_j s^{-j-1}(1-q)^j\right)\\
	&\hspace{1cm}\cdot s^{\lambda}(1-q)^{-\lambda} 
	\prod_{j=1}^{k}\frac{(\omega_j(1-q)^{\rho_j}
		s^{-\rho_j};q)_{\infty}}{(q^{\gamma_j}\omega_j(1-q)^{\rho_j}
		s^{-\rho_j};q)_{\infty}}, 
\end{align*}
which implies
\begin{equation}\label{EigenProb-2-Proof-1}
	\big(s^\lambda(1-q)^{-\lambda}\Xi(s)-A\big){}_{q}L_s y(x)
	=\Xi(s)\sum_{j=0}^{n-1} c_j s^{\lambda-j-1}(1-q)^{-\lambda+j},
\end{equation}
where
\[
\Xi(s):=\prod_{j=1}^{k}\frac{(\omega_j(1-q)^{\rho_j}
	s^{-\rho_j};q)_{\infty}}{(q^{\gamma_j}\omega_j(1-q)^{\rho_j}
	s^{-\rho_j};q)_{\infty}}.
\]
For convenience, we write $\Theta(s)=\dfrac{1}{\Xi(s)}$. By the same argument as in subsection \ref{Subsection-4-1}, we obtain from \eqref{EigenProb-2-Proof-1} that
\begin{align}\label{EigenProb-2-Proof-2}
	{}_{q}L_s y(x)
	&=\frac{\Xi(s)}{s^\lambda(1-q)^{-\lambda}\Xi(s)-A}\sum_{j=0}^{n-1} c_j s^{\lambda-j-1}(1-q)^{-\lambda+j}\notag\\
	&=\frac{1}{1-As^{-\lambda}(1-q)^{\lambda}\Theta(s)}\sum_{j=0}^{n-1} c_j s^{-j-1}(1-q)^{j}\notag\\
	&=\sum_{j=0}^{n-1} c_j 
	\sum_{\ell=0}^{\infty}A^\ell s^{-\lambda \ell-j-1}(1-q)^{\lambda \ell+j}\Theta^\ell (s), 
\end{align}
provided that $|As^{-\lambda}(1-q)^{\lambda}\Theta(s)|<1$. Using Proposition \ref{Prop-LaplaceT}, we have
\begin{align}\label{EigenProb-2-Proof-3}
	&s^{-\lambda \ell-j-1}(1-q)^{\lambda \ell+j}\Theta^\ell (s)\notag\\
	&\hspace{1cm}=s^{-\lambda \ell-j-1}(1-q)^{\lambda \ell+j}
	\prod_{j=1}^{k}\left(\frac{(q^{\gamma_j}\omega_j(1-q)^{\rho_j}
		s^{-\rho_j};q)_{\infty}}{(\omega_j(1-q)^{\rho_j}
		s^{-\rho_j};q)_{\infty}}\right)^\ell \notag\\
	&\hspace{1cm}={}_{q}L_{s} x^{\lambda \ell+j}
	E_{\bm{\rho}_1\oplus\cdots\oplus\bm{\rho}_k, \lambda\ell+j+1}^{\bm{\gamma}_1\oplus\cdots\oplus\bm{\gamma}_k}
	\big(q;\underbrace{\omega_1 t^{\rho_1},\cdots,\omega_1 t^{\rho_1}}_{\text{$\ell+1$ terms}},\cdots,\underbrace{\omega_k t^{\rho_k},\cdots,\omega_k t^{\rho_k}}_{\text{$\ell+1$ terms}}\big),
\end{align}
where $\bm{\gamma}_j=(\gamma_j,\cdots,\gamma_j)$ and $\bm{\rho}_j=(\rho_j,\cdots,\rho_j)$ $(j=1,\cdots,k)$, and the operation $\oplus$ is defined in subsection \ref{Subsection-4-1}.

Substituting \eqref{EigenProb-2-Proof-3} into \eqref{EigenProb-2-Proof-2} and then taking the inverse $q$-Laplace transform, we obtain the solution to Cauchy problem \eqref{EigenProb-2} as follows: 
\begin{align*} 
	y(s)&=\sum_{j=0}^{n-1} c_j 
	\sum_{\ell=0}^{\infty}A^\ell x^{\lambda \ell+j}
	E_{\bm{\rho}_1\oplus\cdots\oplus\bm{\rho}_k, \lambda\ell+j+1}^{\bm{\gamma}_1\oplus\cdots\oplus\bm{\gamma}_k}
	\big(q;\underbrace{\omega_1 t^{\rho_1},\cdots,\omega_1 t^{\rho_1}}_{\text{$\ell+1$ terms}},\cdots,\underbrace{\omega_k t^{\rho_k},\cdots,\omega_k t^{\rho_k}}_{\text{$\ell+1$ terms}}\big).
\end{align*}

\begin{remark}
	When $k=1$ in \eqref{EigenProb-2}, the original problem has the form: 
	\begin{equation}\label{EigenProb-2-Remark-1}
		\begin{cases}
			\big({}^C\mathcal{D}_{q;\rho,\lambda;(\omega)}^{\gamma} y\big)(t)=A y(t),\\
			D_q^j y(0^+)=c_j, j=0,\cdots,n-1,
		\end{cases}
	\end{equation}
	and its solution is given by
	\[
	y(s)=\sum_{j=0}^{n-1} c_j 
	\sum_{\ell=0}^{\infty}A^\ell x^{\lambda \ell+j}
	E_{(\rho,\cdots,\rho), \lambda\ell+j+1}^{(\gamma,\cdots,\gamma)}
	\big(q;\underbrace{\omega t^{\rho},\cdots,\omega t^{\rho}}_{\text{$\ell+1$ terms}}\big),
	\]
	where
	\begin{align*}
		&E_{(\rho,\cdots,\rho), \lambda\ell+j+1}^{(\gamma,\cdots,\gamma)}
		\big(q;\underbrace{\omega t^{\rho},\cdots,\omega t^{\rho}}_{\text{$\ell+1$ terms}}\big)\\
		&\hspace{0.5cm}=\sum_{n_1,\cdots,n_{\ell+1}=0}^{\infty}
		\frac{(q^{\gamma};q)_{n_1}}{(q;q)_{n_1}}\cdots\frac{(q^{\gamma};q)_{n_{\ell+1}}}{(q;q)_{n_{\ell+1}}} 
		\frac{(\omega t^{\rho})^{(n_1+\cdots+n_{\ell+1})}}{\Gamma_q(\lambda\ell+j+1 + \rho(n_1+\cdots+n_{\ell+1}))}.
	\end{align*}
	Using our similar analysis as mentioned in Remark \ref{Subsection-4-1-Remark} of subsection \ref{Subsection-4-1}, one can readily conclude that \eqref{EigenProb-2-Remark-1} is a $q$-analogue of the eigenvalue problem for the Prabhakar derivative of Caputo-type \cite[p. 28, Eq. (5.14)]{Giusti et al-2020}. 
\end{remark}

The $q$-Laplace transform method employed above also applies to the non-homogeneous versions of problems \eqref{EigenProb-1} and \eqref{EigenProb-2}, respectively. We omit here further details about such considerations. In our forthcoming work, we propose to consider two classes of general Cauchy initial value problems.

\section{Concluding remarks}\label{Section-5}

\begin{itemize}
	\item[(1)] An interesting remark attributed in the paper \cite{Mainardi-Gorenflo-2007}, states that the  “Mittag-Leffler functions are regarded as the queen function of fractional calculus”. Very useful and interesting research work continues to motivate researchers to consider problems of interconnections and applications with other related branches, see \cite{Gorenflo-Kilbas-Mainardi-Rogosin-Book-2020}. Not only have univariate Mittag-Leffler functions been continuously studied in depth (see \cite{Abdelhakim-2025}, \cite{Bie et al-2025}, \cite{Garrappa-Gerhold-Popolizio-Simon-2025}), but various multivariate generalizations have also emerged in the works in \cite{Abilassan-Restrepo-Suragan-2023} and  \cite{Giordano-2025}. The multivariate $q$-Mittag-Leffler function $E_{\bm{\rho},\lambda}^{\bm{\gamma}}(q;\mathbf{z})$ introduced in this paper has been discussed in some detail in Section \ref{Section-2}.  There seems scope for further consideration and investigations for other interesting problems. We specificaly like to suggest here one research direction deserving further attention: Find a (multiple) Mellin-Barnes contour integral representation for $E_{\bm{\rho},\lambda}^{\bm{\gamma}}(q;\mathbf{z})$, and thereby investigate its asymptotic behaviour.  
	\item[(2)] Various analytical properties of fractional integral operators including boundedness and compactness on various function spaces remain an important and active research proposal. In this paper, we only study the boundedness of the operator $E_{q;\bm{\rho},\lambda;(\bm{\omega})}^{\bm{\gamma}}$ on $L_q^p[0,a]$ $(p\geq1)$. However, as suggested by the work of Ledesma \emph{et al.} \cite{Ledesma et al-2026} (and the reference therein), studying the boundedness or compactness of $E_{q;\bm{\rho},\lambda;(\bm{\omega})}^{\bm{\gamma}}$ on other spaces may be another meaningful and important proposal (or a direction) for future research. 
	
\end{itemize}

\section*{Funding}

No funding was received for conducting this study.

\section*{Contributions}

All authors contributed equally to the preparation of this paper.

\section*{Conflicts of interest}

The authors declare that there is no conflict of interest.

\section*{Data Availability}

This manuscript has no associated data.

\section*{Acknowledgements}

We are grateful to Peng-Cheng Hang for his careful reading of the initial manuscript and providing valuable suggestions.


\begin{thebibliography}{99}
	\footnotesize
	\setlength{\itemsep}{-2pt}
	
	\bibitem{Abilassan-Restrepo-Suragan-2023}
	A. Abilassan, J.E. Restrepo, D. Suragan, 
	On a variant of multivariate Mittag-Leffler's function arising in the Laplace transform method, 
	Integral Transforms Spec. Funct. 34 (3) (2023), 244--260.
	\url{https://doi.org/10.1080/10652469.2022.2111420}
	
	\bibitem{Abdelhakim-2025}
	A.A. Abdelhakim, 
	Mittag-Leffler functions as eigenfunctions of the Hilbert transform, 
	Math. Methods Appl. Sci. 48 (8) (2025), 9340--9343.
	\url{https://doi.org/10.1002/mma.10776}
	
	\bibitem{Abdeljawad et al-2011}
	T. Abdeljawad, D. Baleanu, 
	Caputo $q$-fractional initial value problems and a $q$-analogue Mittag-Leffler function, 
	Commun. Nonlinear Sci. Numer. Simul. 16 (12) (2011), 4682--4688.
	\url{https://doi.org/10.1016/j.cnsns.2011.01.026}
	
	\bibitem{Agarwal-1969}
	R.P. Agarwal,
	Certain fractional $q$-integrals and $q$-derivatives, 
	Proc. Camb. Phil. Soc. 66 (1969), 365--370. 
	\url{https://doi.org/10.1017/S0305004100045060}
	
	\bibitem{AlSalam-1966}
	W.A. Al-Salam, 
	Some fractional $q$-integrals and $q$-derivatives, 
	Proc. Edinb. Math. Soc. 2 (15) (1966), 135--140.
	\url{https://doi.org/10.1017/S0013091500011469}
	
	\bibitem{AlTowailb-2019}
	M.A. Al-Towailb,
	The solution of certain triple $q$-integral equations in fractional $q$-calculus approach,
	Arab J. Math. Sci. 25 (1) (2019), 17--28.
	\url{https://doi.org/10.1016/j.ajmsc.2018.10.001} 
	
	\bibitem{Annaby-Mansour-Book-2012}
	M.H. Annaby, Z.S. Mansour,
	$q$-Fractional Calculus and Equations,
	Springer, 2012.
	
	\bibitem{Altin-Erkus-Tasdelen-2006}
	A. Alt\i n, E. Erku\c{s}, F. Ta\c{s}delen, 
	The $q$-Lagrange polynomials in several variables, 
	Taiwanese J. Math. 10 (5) (2006), 1131--1137.
	\url{https://doi.org/10.11650/twjm/1500557293}
	
	\bibitem{Bie et al-2025}
	H.D. Bie, P. Lian, F. Maes, 
	Bounds for the kernel of the $(\kappa,a)$-generalized Fourier transform, 
	J. Funct. Anal. 288 (4) (2025), Article ID 110755, 29 pp. 
	\url{https://doi.org/10.1016/j.jfa.2024.110755}
	
	\bibitem{Bouzeffour-2026}
	F. Bouzeffour, 
	General fractional $q$-integrals and 
	$q$-derivatives: a Sonin-Luchko $q$-kernel approach, 
	Arab. J. Math. 15 (2) (2026), 599--610.
	\url{https://doi.org/10.1007/s40065-025-00545-9}
	
	\bibitem{Cardoso-Petronilho-2015}
	J.L. Cardoso, J. Petronilho,
	Variations around Jackson's quantum operator,
	Methods Appl. Anal. 22 (4) (2015), 343--358.
	\url{https://dx.doi.org/10.4310/MAA.2015.v22.n4.a1}
	
	\bibitem{Castro-2025}
	L.P. Castro, 
	On the solutions to a Riemann-Liouville fractional $q$-derivative boundary value problem, 
	Fract. Calc. Appl. Anal. 28 (5) (2025), 2302--2332.
	\url{https://doi.org/10.1007/s13540-025-00441-1}
	
	\bibitem{Castro-2026}
	L.P. Castro, 
	Positive solutions for a class of two-term fractional $q$-derivative boundary value problems with an integral condition,
	Differ. Equ. Dyn. Syst. 34 (2) (2026), 499--524.
	\url{https://doi.org/10.1007/s12591-025-00751-z}
	
	\bibitem{Cinque-2021}
	F. Cinque, 
	On the sum of independent generalized Mittag-Leffler random variables and the related fractional processes, 
	Stochastic Anal. Appl. 40 (1) (2022), 103--117.
	\url{https://doi.org/10.1080/07362994.2021.1890120}
	
	\bibitem{Cinque-Orsingher-2024}
	F. Cinque, E. Orsingher, 
	Analysis of fractional Cauchy problems with some probabilistic applications, 
	J. Math. Anal. Appl. 536 (1) (2024), Article ID 128188, 23 pp.
	\url{https://doi.org/10.1016/j.jmaa.2024.128188}
	
	\bibitem{Ernst-2023}
	T. Ernst, 
	On quadruple $q$-hypergeometric functions and diverse generalizations to $n$ variables in the spirit of Exton, 
	Bull. Iran. Math. Soc. 49 (5) (2023), Paper No. 73, 23 pp.
	\url{https://doi.org/10.1007/s41980-023-00809-0}
	
	\bibitem{Garg-Alha-Chanchlani-2015}
	M. Garg, S. Alha, L. Chanchlani, 
	On generalized composite fractional $q$-derivative, 
	Matematiche 70 (2) (2015), 123--134.
	
	\bibitem{Garg-Chanchlani-2012}
	M. Garg, L. Chanchlani, 
	On fractional $q$-kinetic equation, 
	Mat. Bilt. 36 (2012), 33--46.
	
	\bibitem{Garrappa-Gerhold-Popolizio-Simon-2025}
	R. Garrappa, S. Gerhold, M. Popolizio, T. Simon, 
	On some inequalities for the two-parameter Mittag-Leffler function in the complex plane, 
	J. Math. Anal. Appl. 551 (2025), 129588.
	\url{https://doi.org/10.1016/j.jmaa.2025.129588}
	
	\bibitem{Gasper-Rahman-Book-2004}
	G. Gasper, M. Rahman,
	Basic Hypergeometric Series,
	Cambridge University Press, Cambridge, 2004.
	
	\bibitem{Giordano-2025}
	C. Giordano, 
	Generalized Mittag-Leffler functions, Borel-Laplace multitransforms, and quantum differential equations of Fano varieties, 2025. 
	\url{https://arxiv.org/abs/2506.18033}.
	
	\bibitem{Giusti et al-2020}
	A. Giusti, I. Colombaro, R. Garra, R. Garrappa, F. Polito, M. Popolizio, F. Mainardi,
	A practical guide to Prabhakar fractional calculus,
	Fract. Calc. Appl. Anal., 23 (1) (2020), 9--54.
	\url{https://doi.org/10.1515/fca-2020-0002} 
	
	\bibitem{Gorenflo-Kilbas-Mainardi-Rogosin-Book-2020}
	R. Gorenflo, A.A. Kilbas, F. Mainardi, S.V. Rogosin,
	Mittag-Leffler functions, Related Topics and Applications,
	Springer, 2020.

	\bibitem{Gonzalez-Bayron-2026}
	M.P. Gonzalez, M.-R. Bayron, 
	A $q$-Caputo fractional generalization of Tsallis entropy, 
	Int. J. Theor. Phys. 65 (7) (2026), Paper No. 189, 21 pp.
	\url{https://doi.org/10.1007/s10773-026-06389-z}
	
	\bibitem{Kac-Cheung-Book-2002}
	V. Kac, P. Cheung, Quantum calculus, Universitext, Springer, New York, NY, 2002.
	
	\bibitem{Kilbas-Srivastava-Trujillo-Book-2006}
	A.A. Kilbas, H.M. Srivastava, J.J. Trujillo,
	Theory and Applications of Fractional Differential Equations,
	Elsevier, 2006.
	
	\bibitem{Ledesma et al-2026}
	C.E.T. Ledesma, Y. Luchko, G.A.M. Cruz, W.Z. Vera, 
	Some boundedness results for the Prabhakar integral operator, 
	Mediterr. J. Math. 23 (2026), Paper No. 113, 22 pp. 
	\url{https://doi.org/10.1007/s00009-026-03103-5}
	
	\bibitem{Luo-Zhou-2026}
	M.-J. Luo, X.-L. Zhou, Babenko's method for $q$-fractional integral equations: a systematic framework, 
	Math. Methods Appl. Sci. 49 (14) (2026), 15674--15695.
	\url{https://doi.org/10.1002/mma.70845}
	
	\bibitem{Luo-Raina-Zhou-2025}
	M.-J. Luo, R.K. Raina, X.-L. Zhou,
	Applications of Babenko's method to a class of integral equations,
	Fract. Calc. Appl. Anal. 28 (6) (2025), 2748--2773.
	\url{https://doi.org/10.1007/s13540-025-00454-w}
	
	\bibitem{Luo-Raina-2014}
	M.-J. Luo, R.K. Raina, 
	On certain classes of fractional kinetic equations, 
	Filomat 28 (10) (2014), 2077--2090.
	\url{https://doi.org/10.2298/FIL1410077L} 
	
	\bibitem{Luo-Raina-Chapter-2025}
	M.-J. Luo, R.K. Raina, Laplace transformation of fractional integrals and derivatives, in \emph{The Fundamentals of Fractional Calculus}, eds. D.K. Singh and M. Yavuz, CRC Press/Apple Academic Press, Boca Raton, FL, 2025. 
	
	\bibitem{Mainardi-Gorenflo-2007}
	F. Mainardi, R. Gorenflo, 
	Time-fractional derivatives in relaxation processes: a tutorial survey,  
	Fract. Calc. Appl. Anal. 10 (3) (2007), 269--308.
	
	\bibitem{Maligaranda-Oinarov-Persson-2014}
	L. Maligranda, R. Oinarov, L.-E. Persson, 
	On Hardy $q$-inequalities, 
	Czech. Math. J. 64 (3) (2014), 659--682.
	\url{https://doi.org/10.1007/s10587-014-0125-6} 
	
	\bibitem{Mamanazarov-2024}
	A. Mamanazarov, The Prabhakar Fractional $q$-Integral and $q$-Differential Operators, in \emph{Extended Abstracts 2021/2022}, ed. M. Ruzhansky and K. van Bockstal, Birkh\"{a}user, 2024.
	\url{https://doi.org/10.1007/978-3-031-42539-4_11}
	
	\bibitem{Ozarslan-Fernandez-2022}
	M.A. \"{O}zarslan, A. Fernandez, 
	On the fractional calculus of multivariate Mittag-Leffler functions, 
	Int. J. Comput. Math. 99 (2) (2022), 247--273.
	
	\bibitem{Purohit-Kalla-2011}
	S.D. Purohit, S.L. Kalla, 
	A generalization of $q$-Mittag-Leffler function, 
	Mat. Bilt. 35 (2011), 15--26.
	
	\bibitem{Sakhno-Storozhuk-2026}
	L. Sakhno, A. Storozhuk, 
	On some extensions of generalized counting processes, 2026. 
	\url{https://arxiv.org/abs/2604.00882}
	
	\bibitem{Saxena-Kalla-Saxena-2011}
	R.K. Saxena, S.L. Kalla, R. Saxena, 
	Multivariate analogue of generalized Mittag-Leffler function, 
	Integral Transforms Spec. Funct. 22 (2011), 533--548.
	\url{https://doi.org/10.1080/10652469.2010.533474}
	
	\bibitem{Shaimardan-Tokmagambetov-2021}
	S. Shaimardan, N.S. Tokmagambetov, 
	On the solutions of some fractional $q$-differential equations with the Riemann-Liouville fractional $q$-derivative, 
	Bull. Karaganda Univ., Math. Ser. 104 (4) (2021), 130--141.
	
	\bibitem{Shaimardan-Persson-Tokmagambetov-2020}
	S. Shaimardan, L.E. Persson, N.S. Tokmagambetov,
	Existence and uniqueness of some Cauchy type problems in fractional $q$-difference calculus,
	Filomat 34 (13) (2020), 4429--4444.
	\url{https://doi.org/10.2298/FIL2013429S} 
	
	\bibitem{Shaimardan-Karimov-Ruzhansky-Mamanazarov-2025}
	S. Shaimardan, E. Karimov, M. Ruzhansky, A. Mamanazarov, 
	The Prabhakar fractional $q$-integral and $q$-differential operators, and their properties, 
	Filomat 39 (6) (2025), 2003--2016.
	\url{https://doi.org/10.2298/FIL2506003S}
	
	\bibitem{Sheng-Zhang-2022a}
	Y. Sheng, T. Zhang, 
	Some results on the $q$-calculus and fractional $q$-differential equations, 
	Mathematics, 10 (2022), 64.
	\url{https://doi.org/10.3390/math10010064}
	
	\bibitem{Srivastava-Karlsson-Book-1985}
	H.M. Srivastava, P.W. Karlsson, 
	Multiple Gaussian Hypergeometric Series, 
	Ellis Horwood Ltd., Chichester, 1985.
	
	\bibitem{Stankovc-Rajkovic-Marinkovic-2009}
	M.S. Stankovi\'{c}, P.M. Rajkovi\'{c}, S.D. Marinkovi\'{c}, 
	On fractional $q$-integrals, $q$-derivatives and corresponding equations, in Application of mathematics in technical and natural sciences, 300--308, AIP Conf. Proc., 1186 Amer. Inst. Phys., Melville, NY.
	
	\bibitem{Tsallis-1988}
	C. Tsallis, 
	Possible generalization of Boltzmann--Gibbs statistics, 
	J. Stat. Phys. 52 (1-2) (1988), 479--487.
	\url{https://doi.org/10.1007/BF01016429}
	
	\bibitem{Zhang-Guo-2020}
	T. Zhang, Q. Guo, 
	The solution theory of the nonlinear $q$-fractional differential equations,
	Appl. Math. Lett. 104 (2020), 106282.
	\url{https://doi.org/10.1016/j.aml.2020.106282}
	
	\bibitem{Zhang-Wang-2021}
	T. Zhang, Y. Wang, 
	The unique existence of solution in the $q$-integrable space for the nonlinear $q$-fractional differential equations, 
	Fractals 29 (3) (2021), 2150050.
	\url{https://doi.org/10.1142/S0218348X2150050X} 
\end{thebibliography}
\end{document}